\newtheorem{theorem}{Theorem}[section]
\newtheorem{proposition}[theorem]{Proposition}
\newtheorem{corollary}[theorem]{Corollary}
\theoremstyle{definition}
\newtheorem{definition}[theorem]{Definition}
\newtheorem{example}[theorem]{Example}
\theoremstyle{remark}
\numberwithin{equation}{section}
\newcommand{\A}{{\mathcal A}}
\newcommand{\B}{{\mathcal B}}
\newcommand{\C}{{\mathcal C}}
\newcommand{\vC}{{\sf C}}
\newcommand{\D}{{\frak D}}
\newcommand{\F}{{\mathcal F}}
\newcommand{\G}{{\mathcal G}}
\renewcommand{\O}{{\mathcal O}}
\newcommand{\R}{{\mathcal R}}
\newcommand{\fS}{{\frak S}}
\newcommand{\bbZ}{{\mathbb Z}}
\begin{document}

\title[Universal Cycles for Minimum Coverings and 2-Radius Sequences]{Universal Cycles for Minimum Coverings of Pairs by Triples, with Application to
2-Radius Sequences}

\author{Yeow Meng Chee}
\address{Division of Mathematical Sciences \\
School of Physical and Mathematical Sciences \\
Nanyang Technological University \\
Singapore}
\thanks{Research  
supported in part by the National Research Foundation of Singapore
under Research Grant NRF-CRP2-2007-03 and by the Nanyang Technological University
under Research Grant M58110040.}

\author{San Ling}
\author{Yin Tan}
\author{Xiande Zhang}

\subjclass[2000]{Primary 05B05, 05B07, 05B40; Secondary 05C38, 68R05}



\keywords{Alternating hamiltonian cycle, block intersection graph, group divisible design,
minimum covering, sequence of radius two, Steiner triple system, universal cycle}

\begin{abstract}
A new ordering, extending the notion of universal cycles of Chung {\em et al.} (1992), is proposed
for the blocks of $k$-uniform set systems. 
Existence of minimum coverings of pairs by triples that possess such
an ordering is established for all orders. Application to the construction of short
2-radius sequences
is given, with some new
2-radius sequences found through computer search.
\end{abstract}

\maketitle


\section{Introduction}

Determining the existence of orderings on the blocks of various classes
of set systems (all terms are defined in the next section) to meet specified
criteria is a fundamental problem in discrete mathematics,
due to applications in combinatorial computing.
The following two types of orderings have long histories:

\begin{enumerate}[(i)]
\item An ordering of the blocks such that any two successive blocks have
symmetric difference of the smallest possible size 
(size of one for set systems having more than one block size, and
size of two for $k$-uniform set systems).
This type of ordering was first studied by Gray \cite{Gray:1953}
and has now come to be known as {\em combinatorial Gray codes} \cite{Savage:1997}. 
Combinatorial Gray codes are known to exist for the complete
set system $(X,2^X)$ \cite{Gray:1953},
for the complete $k$-uniform set system $(X,\binom{X}{k})$ 
\cite{Bitneretal:1976,EadesMcKay:1984,Ruskey:1988},
and for some classes of triple systems with index two
\cite{Dewar:2007}.
\item An ordering of the blocks such that any two successive blocks have nonempty
intersection. Such an ordering
is also equivalent to a hamiltonian path (or hamiltonian cycle, if one also insists
that the first block and last block have nonempty intersection)
in the block intersection graph of the set system. 
The existence question for this type of ordering for Steiner triple systems was first raised by
Ron Graham in 1987, at an American Mathematical Society meeting. 
It is known that such orderings exist for Steiner 2-designs \cite{HorakRosa:1988},
for pairwise balanced designs whose maximum block size is at most twice the
minimum block size \cite{Alspachetal:1990},
for pairwise balanced designs whose minimum block size at least three \cite{Hare:1995},
and for triple systems of arbitrary index \cite{Horaketal:1999}.
\end{enumerate}

Recently, Dewar \cite{Dewar:2007} studied universal cycles for block designs.
This is an ordering of the blocks such that two successive blocks differ in a small
structural way. The concept of universal cycles was introduced by
Chung {\em et al.} \cite{Chungetal:1992} as a generalization of
de Bruijn sequences \cite{deBruijn:1946}. Universal cycles are known to exist
for the complete $k$-uniform set system $(X,\binom{X}{k})$ when $k\in\{2,3,4,5,6\}$,
provided $k$ divides $\binom{|X|-1}{k-1}$ and $|X|$ is sufficiently large
\cite{Chungetal:1992,Jackson:1993,Hurlbert:1994,Jackson:unpublished}, and
for some classes of triple systems of index two \cite{Dewar:2007}.

Under the definition of universal cycles of Chung {\em et al.} \cite{Chungetal:1992},
universal cycles cannot exist for $k$-uniform set systems in which there are two 
blocks intersecting in less than $k-1$ points. To overcome this restriction, 
Dewar \cite{Dewar:2007} proposed an extension to the definition of universal cycles
and showed that with this new definition, there exist
universal cycles for some classes of Steiner 2-designs, including Steiner triple systems.
However, these new universal cycles of Dewar has the disadvantage that 
the blocks of a set system cannot always be recovered from its universal cycle.

In this paper, the concept of $s$-shift universal cycles is proposed
as another natural extension to the universal cycles of Chung {\em et al.} \cite{Chungetal:1992}.
Minimum $(n,3,2)$-coverings for which there exist $2$-shift universal cycles are constructed
for all $n\geq 3$, by considering alternating hamiltonian cycles
in their block intersection graphs. An application to the construction of 
2-radius sequences of order $n$
\cite{JaromczykLonc:2004} is given.

\section{Notations, Terminology, and Known Results}

The ring $\bbZ/m\bbZ$ is denoted by $\bbZ_m$, and 
the set of nonnegative integers is denoted $\bbZ_{\geq 0}$.
For $n$ a positive integer, the set $\{1,2,\ldots,n\}$ is denoted by $[n]$.
For $X$ a finite set and $k$ an integer, $0\leq k\leq |X|$,
the set of all $k$-subsets of $X$ is denoted by $\binom{X}{k}$.

A {\em set system} is a pair $\fS=(X,\A)$, where $X$ is a finite set and $\A\subseteq 2^X$.
The members of $X$ are called {\em vertices} or {\em points}, and the members of $\A$
are called {\em edges} or {\em blocks}. The {\em order} of $\fS$ is the number of vertices 
$|X|$, and the {\em size} of $\fS$ is the number of edges $|\A|$.
Let $K\subseteq\bbZ_{\geq 0}$. 
The set system $\fS$ is said to be {\em $K$-uniform} if 
$|A|\in K$ for all $A\in\A$. We often write $\{k\}$-uniformity
as $k$-uniformity. A $2$-uniform set system is
also known simply as a {\em graph}.

An {\em edge-colored graph} is a pair $(\Gamma,\chi)$, where 
$\Gamma=(X,\A)$ is a graph and $\chi$ is a function on $\A$ (called a {\em coloring}).
The image of $\chi$, $\vC=\{\chi(A):A\in\A\}$, is called the {\em color set}.
Elements of the color set are called {\em colors}. An edge $A\in\A$ is said to 
have color $c\in\vC$ if $\chi(A)=c$. A cycle in an edge-colored graph is {\em colorful}
if for every color, there exists some edge in the cycle having that color.
An {\em alternating cycle} in an edge-colored graph
is a cycle  
where adjacent edges have distinct colors.

A {\em pairwise balanced design} (PBD), or more specifically a PBD$(n,K)$, is a $K$-uniform
set system $(X,\A)$ of order $n$ such that every $T\in\binom{X}{2}$
is contained in exactly one block of $\A$. A PBD$(n,\{3\})$ is called
a {\em Steiner triple system of order $n$}, and is denoted by STS$(n)$.
It is well known (see, for example, \cite{ColbournRosa:1999})
that an STS$(n)$ exists if and only if $n\equiv 1$ or $3\pmod{6}$.

A {\em covering of pairs by triples} of order $n$
(or {\em $(n,3,2)$-covering}) is a $3$-uniform set system $(X,\A)$ of order $n$
such that every $T\in\binom{X}{2}$ is contained in at least one block of $\A$.
The minimum size of an $(n,3,2)$-covering is the {\em covering number}
$C(n,3,2)$. An $(n,3,2)$-covering of size $C(n,3,2)$ is said to be {\em minimum}.
Fort and Hedlund \cite{FortHedlund:1958} determined that for all $n\geq 3$,
\begin{eqnarray*}
C(n,3,2)&=&\left\lceil \frac{n}{3}\left\lceil \frac{n-1}{2}\right\rceil\right\rceil.
\end{eqnarray*}
Note that an STS$(n)$ is a minimum $(n,3,2)$-covering.

Let $(X,\A)$ be a set system, and let $\G=\{G_1,G_2,\ldots,G_s\}$ be a partition of $X$ into
subsets, called {\em groups}. The triple $(X,\G,\A)$ is a {\em group divisible design} (GDD)
when every 2-subset of $X$ not contained in a group appears in exactly one block, and
$|A\cap G|\leq 1$ for all $A\in\A$ and $G\in\G$. We denote a GDD $(X,\G,\A)$
by $K$-GDD if $(X,\A)$ is $K$-uniform. The {\em type} of a GDD $(X,\G,\A)$
is the multiset $[|G| : G\in\G]$. When more convenient, the exponentiation notation is used
to describe the type of a GDD: a GDD of type $g_1^{t_1}g_2^{t_2}\ldots g_s^{t_s}$
is a GDD where
there are exactly $t_i$ groups of size $g_i$, $i\in[s]$. A PBD$(n,K)$ can be regarded
as a $K$-GDD
of type $1^n$, where each group contains a single point.

The existence of classes of PBDs and GDDs required in this paper is given below.

\begin{theorem}[Gronau, Mullin, and Pietsch \cite{Gronauetal:1995}]
\label{GMP}
There exists a {\rm PBD}$(n,\{3$, $4,5,6,8\})$, for all $n\geq 3$.
\end{theorem}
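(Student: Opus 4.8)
The plan is to reduce the problem to the well-understood PBD-closure of the smaller set $\{3,4,5\}$, and to absorb its two exceptional orders using single blocks of size $6$ and $8$. This reduction also explains why precisely the set $\{3,4,5,6,8\}$ appears in the statement. First I would record that $\{3,4,5\}\subseteq\{3,4,5,6,8\}$, so any PBD$(n,\{3,4,5\})$ is automatically a PBD$(n,\{3,4,5,6,8\})$; hence it suffices to produce a PBD on block set $\{3,4,5\}$ for every $n\geq 3$ at which one exists, and to treat the remaining orders separately.

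Next I would observe that the arithmetic necessary conditions for a PBD$(n,\{3,4,5\})$ impose no restriction whatsoever. Writing the conditions as $\gcd_{k}(k-1)\mid n-1$ and $\gcd_{k}(k(k-1))\mid n(n-1)$, one computes $\gcd(2,3,4)=1$ and $\gcd(6,12,20)=2$, so the first condition always holds and the second reduces to $n(n-1)$ being even, which is automatic. Consequently, by Wilson's theorem on the existence of pairwise balanced designs, a PBD$(n,\{3,4,5\})$ exists for all sufficiently large $n$. To cover the bounded range of remaining orders I would use Wilson-type recursive constructions: start from a master transversal design TD$(5,m)$, truncate one or more groups to obtain a $\{3,4,5\}$-GDD, adjoin an infinite point, and fill each (augmented) group with a smaller $\{3,4,5\}$-PBD obtained inductively. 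Supplying the handful of small ingredient designs directly (for instance a single block for $n\in\{3,4,5\}$, STS$(7)$ and STS$(9)$ for $n\in\{7,9\}$, and explicit $\{3,4\}$-designs for $n=10,12$) then settles every order below the asymptotic threshold.

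The outcome of this analysis is that a PBD$(n,\{3,4,5\})$ exists for all $n\geq 3$ with exactly two exceptions, $n=6$ and $n=8$, each of which must be shown genuinely impossible by a short exhaustion: for $n=6$ a block of size $5$ or $4$ forces the leftover points into an already-covered pair, and a pure triangle decomposition of $K_6$ fails for parity reasons, with an analogous check ruling out $n=8$. For these two orders the trivial single-block design $(X,\{X\})$ with $\abs{X}\in\{6,8\}$ is itself a valid PBD$(n,\{3,4,5,6,8\})$. Combining the $\{3,4,5\}$-designs for all other $n$ with these two single-block designs yields a PBD$(n,\{3,4,5,6,8\})$ for every $n\geq 3$.

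The soft, asymptotic part of the argument is essentially free; I expect the main obstacle to be the finite but nontrivial casework establishing the $\{3,4,5\}$-closure below the Wilson threshold and confirming that $6$ and $8$ are the \emph{only} exceptions. This requires a careful inventory of small ingredient PBDs and of the transversal designs (equivalently, mutually orthogonal Latin squares) available at the relevant orders, together with the impossibility verifications at $n=6,8$.
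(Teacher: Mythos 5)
The paper does not prove this statement at all: it is imported verbatim from Gronau, Mullin, and Pietsch \cite{Gronauetal:1995} as a known ingredient, so there is no in-paper argument to compare yours against. Your outline is nonetheless the standard and correct route to the result: the PBD-closure of $\{3,4,5\}$ is exactly $\{n\geq 3\}\setminus\{6,8\}$ (the arithmetic conditions are vacuous, Wilson handles large $n$, and TD-based recursions plus small ingredients handle the middle range), and the two missing orders are absorbed by the trivial single-block designs of sizes $6$ and $8$ -- which is precisely why the block-size set in the statement is $\{3,4,5,6,8\}$. Two caveats on what you have left unproved. First, the ``finite but nontrivial casework'' below the Wilson threshold is not a footnote; it is essentially the entire content of the cited theorem, so as written your argument is a correct reduction rather than a proof. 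Second, your claim that $n=8$ is ruled out by a check ``analogous'' to $n=6$ is too quick: the degree-parity argument that kills $n=6$ (each triple through a point covers an even number of its $5$ neighbours) does not transfer, since $2a+3b+4c=7$ has solutions. The clean route at $n=8$ is the pair count $3a+6b+10c=\binom{8}{2}=28$, which forces exactly one block of size $5$, after which counting the blocks through a point off that block (each meets the $5$-block in at most one point) gives a contradiction. With those two gaps filled, your proposal is a complete and correct proof.
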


\begin{theorem}[Lenz \cite{Lenz:1984}]
\label{Lenz}
There exists a {\rm PBD}$(n,\{4,5,6,7\})$, for all $n\geq 4$,
except when $n\in\{8,9,10,11,12,14,15,18,19,23\}$.
\end{theorem}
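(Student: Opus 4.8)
The plan is to treat this as a standard pairwise balanced design existence question and settle it with the usual four-part strategy: arithmetic necessary conditions, Wilson's asymptotic existence theorem, explicit recursive constructions to fill the remaining finite gap, and counting arguments to rule out the sporadic exceptions. Write $K=\{4,5,6,7\}$. First I would record the arithmetic necessary conditions for a PBD$(n,K)$. With $\alpha(K)=\gcd\{k-1:k\in K\}=\gcd\{3,4,5,6\}=1$ and $\beta(K)=\gcd\{k(k-1):k\in K\}=\gcd\{12,20,30,42\}=2$, the standard congruences $n\equiv 1\pmod{\alpha(K)}$ and $n(n-1)\equiv 0\pmod{\beta(K)}$ collapse to the single requirement that $n(n-1)$ be even, which holds for every integer $n$. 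Hence no $n\geq 4$ is excluded for arithmetic reasons, and every obstruction to existence must be small and sporadic.

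Next I would eliminate the ten exceptional orders by replication and counting arguments. Fix a point $x$; since every pair $\{x,y\}$ lies in exactly one block, the blocks through $x$ satisfy $\sum_{B\ni x}(\abs{B}-1)=n-1$ with each summand in $\{3,4,5,6\}$. Reading this alongside the global divisibility constraints — for each $k\in K$ the number of point/$k$-block incidences must be a multiple of $k$, and the numbers $\binom{\abs{B}}{2}$ must sum to $\binom{n}{2}$ — forces a contradiction in each case. For instance, when $n=8$ the only partition of $7$ into parts from $\{3,4,5,6\}$ is $3+4$, so every point lies on exactly one $4$-block and one $5$-block; counting the $5$-block incidences then demands $8/5$ five-blocks, which is impossible. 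The orders $n\in\{9,10,11,12,14,15,18,19,23\}$ fall to analogous arguments, refined with a little structural analysis (or a short exhaustive check) in the cases where several partitions survive the first pass.

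For the existence direction I would invoke Wilson's theorem to obtain a PBD$(n,K)$ for all sufficiently large $n$ — legitimate precisely because the necessary conditions hold for every $n$ — and then close the remaining finite interval recursively. The workhorse is the PBD-closure property: writing $B(K)$ for the set of orders admitting a PBD$(\cdot,K)$, if there is a PBD$(n,M)$ with $M\subseteq B(K)$ then $n\in B(K)$, since overlaying a PBD$(m,K)$ on each block of size $m$ preserves the ``every pair exactly once'' property. Feeding a finite library of ingredient designs — small PBDs on $\{4,5,6,7\}$, suitable $K$-GDDs, and transversal designs TD$(k,m)$ (equivalently, families of mutually orthogonal Latin squares) combined through Wilson's Fundamental Construction, in which points are weighted and replaced by ingredient GDDs — then realizes every admissible order below the Wilson threshold.

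The hard part will be pinning down the exception list exactly: simultaneously proving that each of the ten listed orders admits no PBD$(n,K)$ and exhibiting a valid construction for \emph{every} other $n$ up to the asymptotic cutoff. The delicate region is near the top of the list (around $n=18,19,23$ and just above), where the admissible decompositions are tight and the seed designs must be chosen so that the recursive constructions actually have legal ingredients; a single overlooked small order would either inflate the set of exceptions or puncture the claimed existence range. This bookkeeping — rather than any one construction — is where the real work and the genuine risk of error lie.
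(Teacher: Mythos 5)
First, note that the paper offers no proof of this statement: it is quoted from Lenz \cite{Lenz:1984} as a known ingredient, so there is no in-paper argument to compare yours against. Judged on its own terms, your proposal correctly identifies the standard four-part strategy, and your $n=8$ computation is sound: the unique partition $7=3+4$ into parts from $\{3,4,5,6\}$ forces exactly one block of size $4$ and one of size $5$ through every point, and $8$ is not divisible by $5$. But as a proof there are two genuine gaps. The nonexistence half is executed only for $n=8$; for the remaining nine exceptional orders the replication equation admits many partitions of $n-1$ into parts from $\{3,4,5,6\}$ (for $n=23$, for instance, $22=6+6+6+4=6+6+5+5=5+5+4+4+4=\cdots$), so no single counting argument of the type you describe disposes of them, and the phrase ``analogous arguments, refined with a little structural analysis'' is carrying the entire burden precisely for the cases (notably $n=18,19,23$) that are known to require delicate structural or exhaustive arguments. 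Asserting that they ``fall'' is not a proof that they do.

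The existence half has the same problem in mirror image. Wilson's asymptotic theorem gives no usable threshold here, so the ``finite gap'' you propose to close recursively is not finite in any practical sense until you exhibit concrete seed designs (the projective plane of order $3$ for $n=13$, affine planes for $n=16$ and $n=25$, the projective plane of order $4$ for $n=21$, truncated transversal designs for the general residue classes, and so on) and verify that every non-exceptional order is actually reached by the closure process --- a single unreachable order would enlarge the exception list. You candidly acknowledge that this bookkeeping is where the work and the risk lie, but acknowledging the gap is not the same as closing it: as written, the proposal is a correct road map to Lenz's theorem rather than a proof of it, and neither the exactness of the ten-element exception set nor the completeness of the existence range is established.
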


\begin{theorem}[Colbourn, Hoffman, and Rees \cite{Colbournetal:1992b}]
\label{CHR}
Let $g,t,u\in\bbZ_{\geq 0}$.
There exists a $\{3\}$-{\rm GDD} of type
$g^tu^1$ if and only if the following conditions are all satisfied:
\begin{enumerate}[{\rm (i)}]
\item if $g>0$ then $t\geq 3$, or $t=2$ and $u=g$, or $t=1$ and $u=0$, or $t=0$;
\item $u\leq g(t-1)$ or $gt=0$;
\item $g(t-1)+u\equiv 0\pmod{2}$ or $gt=0$;
\item $gt\equiv 0\pmod{2}$ or $u=0$;
\item $g^2\binom{t}{2}+gtu\equiv 0\pmod{3}$.
\end{enumerate}
\end{theorem}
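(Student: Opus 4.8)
The plan is to separate the straightforward necessity of conditions (i)--(v) from the substantial sufficiency direction. For necessity I would argue by counting. Condition (i) is structural: since every block has three points lying in three distinct groups, a configuration with cross-group pairs to cover forces either at least three groups ($t\geq 3$) or exactly three groups of equal size ($t=2$ and $u=g$, which is the uniform type $g^3$), the remaining alternatives being the degenerate types $g^1$ and $u^1$ that carry no cross-group pairs at all. For the replication conditions I would fix a point $p$ in a group of size $g$: it must be paired with each of the $n-g=g(t-1)+u$ points outside its group, and each block through $p$ absorbs exactly two of them, giving condition (iii); repeating the argument with a point of the $u$-group, which must reach all $gt$ points of the $g$-groups, gives condition (iv). Condition (ii) follows from an injection argument: for $p$ in a $g$-group, the third point of the block carrying the pair $\{p,q\}$ with $q$ in the $u$-group lies in one of the other $t-1$ groups of size $g$, and distinct $q$ must yield distinct third points (else a cross pair is repeated), so $u\leq g(t-1)$. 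Finally the number of cross-group pairs is $g^2\binom{t}{2}+gtu$, and since each block covers three of them this count must be divisible by $3$, which is condition (v).

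The sufficiency direction is the real content, and I would establish it by recursive construction. The foundational ingredients are transversal designs $\mathrm{TD}(3,m)$---equivalently $\{3\}$-GDDs of type $m^3$, supplied by a single Latin square of order $m$ and hence available for all $m$---together with the uniform $\{3\}$-GDDs of type $g^t$, whose existence under the analogous divisibility conditions is classical. From these I would assemble the mixed type $g^tu^1$ using Wilson's fundamental construction: beginning from a master GDD or transversal design, assign integer weights to its points so that one group accumulates total weight $u$, inflate each point to a block-set of its weight using the ingredient designs, and then fill in the inflated groups with small $\{3\}$-GDDs. Because conditions (iii)--(v) depend only on the residues of $g$, $t$, and $u$ modulo $6$, I would organize the argument into cases according to $g\bmod 6$, and within each case according to the parity of $t$ and the residue of $u$, so that in every case a fixed finite family of ingredients realizes all sufficiently large admissible parameters. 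The PBD existence results of Theorems~\ref{GMP} and~\ref{Lenz} feed these constructions by making available block sizes in $\{3,4,5,6,7,8\}$, each of which can in turn be decomposed into triples or used as a master structure.

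The main obstacle, as in all such existence theorems, is the finite collection of small and boundary cases that the recursion cannot reach. These are the genuinely transversal configurations ($t=3$, and the degenerate $t=2$, $u=g$), the parameters with $u$ at or near the extremal value $g(t-1)$ where condition (ii) is tight and little room is left for the weighting step, and the few residue classes in which the modulo-$2$ and modulo-$3$ constraints jointly exclude the most convenient ingredients. Each such case must be settled directly---typically by a cyclic difference family over $\bbZ_n$, an explicit layout built from mutually orthogonal Latin squares, or, for the smallest orders, exhaustive search. The delicate part is twofold: ensuring that the case division is genuinely exhaustive, so that no admissible triple $(g,t,u)$ is orphaned between the recursive range and the base cases, and verifying by hand that every base case is actually constructible rather than a further exception. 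It is precisely the careful management of this finite residue of cases that makes the theorem difficult.
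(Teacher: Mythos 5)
First, a point of order: the paper does not prove this statement at all. Theorem~\ref{CHR} is quoted verbatim from Colbourn, Hoffman, and Rees \cite{Colbournetal:1992b} and used purely as a black box, so there is no in-paper argument to compare yours against; your proposal has to be judged on its own terms. On those terms, the necessity half is correct and essentially complete: the counting arguments for (iii), (iv), and (v), the injection $q\mapsto r_q$ into the other $t-1$ groups for (ii), and the structural analysis of the cases $t\leq 2$ for (i) are all sound and are the standard derivations of these conditions.

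The sufficiency half, however, is a program rather than a proof, and that is where the entire content of the theorem lies. You correctly name the tools --- transversal designs $\mathrm{TD}(3,m)$ from Latin squares, uniform $\{3\}$-GDDs of type $g^t$, Wilson's fundamental construction with a weighting that isolates the group of size $u$, and a case split on residues modulo $6$ --- but none of the actual work is carried out: no weight functions are exhibited, the existence of the required master designs in each residue class is not checked, the boundary regime where $u$ is at or near $g(t-1)$ (where the weighting step has no slack) is not analyzed, and not a single one of the small seed designs is constructed. The assertion that ``a fixed finite family of ingredients realizes all sufficiently large admissible parameters'' in each case is precisely the statement that has to be proved, and the verification that no admissible triple $(g,t,u)$ is stranded between the recursive range and the base cases is a substantial, delicate piece of bookkeeping --- as you yourself observe --- that cannot be discharged by describing it. So the proposal is a plausible roadmap for the sufficiency direction, consistent with how such theorems are in fact proved, but it leaves the genuinely hard part entirely open.
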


\begin{theorem}[Brouwer, Schrijver, and Hanani \cite{Brouweretal:1977}]
\label{BSH}
There exists a $\{4\}$-{\rm GDD} of type $g^t$ if and only if $t\geq 4$ and
\begin{enumerate}[{\rm (i)}]
\item $g\equiv 1$ or $5\pmod{6}$ and $t\equiv 1$ or $4\pmod{12}$; or
\item $g\equiv 2$ or $4\pmod{6}$ and $t\equiv 1\pmod{3}$; or
\item $g\equiv 3\pmod{6}$ and $t\equiv 0$ or $1\pmod{4}$; or
\item $g\equiv 0\pmod{6}$,
\end{enumerate}
with the two exceptions of types $2^4$ and $6^4$, for which $\{4\}$-{\rm GDD}s do not exist.
\end{theorem}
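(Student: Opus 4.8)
The plan is to prove the two directions separately: first the necessity of the stated conditions by double counting, then sufficiency by combining transversal designs as base cases with recursive weighting constructions.

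For necessity, I would let $(X,\G,\A)$ be a $\{4\}$-GDD of type $g^t$ and count in two ways. Fixing a point $x$ in a group $G$, every one of the $g(t-1)$ points outside $G$ is paired with $x$ in exactly one block, and each block through $x$ contains exactly three such points (one in each of three distinct other groups), so the replication number is $g(t-1)/3$, forcing $3\mid g(t-1)$. Counting the $\binom{t}{2}g^2$ cross-group pairs, each block covering exactly $\binom{4}{2}=6$ of them, gives $|\A|=t(t-1)g^2/12$, forcing $12\mid t(t-1)g^2$; and $t\geq 4$ since a block meets four distinct groups. A short case analysis on $g\bmod 6$ then unpacks these two congruences into the four listed cases: when $6\mid g$ both divisibilities are automatic, so every $t\geq 4$ qualifies (case (iv)), while when $\gcd(g,6)=1$ one needs $3\mid t-1$ and $4\mid t(t-1)$, i.e. $t\equiv 1,4\pmod{12}$ (case (i)), and similarly for the intermediate residues.

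For sufficiency, the base cases are the transversal designs: a $\{4\}$-GDD of type $m^4$ is exactly a TD$(4,m)$, equivalently a pair of mutually orthogonal Latin squares of order $m$, which exist for every $m$ except $m=2$ and $m=6$. This both identifies the source of the two exceptions $2^4$ and $6^4$ and supplies an unlimited stock of ingredient designs. I would then bootstrap to arbitrary admissible $(g,t)$ by Wilson's Fundamental Construction: assigning weights to the points of a suitable master GDD and substituting transversal designs TD$(4,\cdot)$ for its blocks. Concretely, inflating a $\{4\}$-GDD of type $g^t$ by a factor $m$ (uniform weight $m$, ingredient TD$(4,m)$) yields one of type $(mg)^t$, which multiplies group sizes while fixing $t$; richer masters together with ``filling in groups'' constructions raise $t$ and adjust residues. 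A finite list of sporadic parameter pairs is then settled by direct or computer construction.

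The hard part will be the small and boundary cases, precisely those where the weighting constructions stall because the needed TD$(4,2)$ or TD$(4,6)$ does not exist. One must show that these local failures never propagate to a genuine nonexistence beyond the two stated exceptions, typically by finding alternative ingredient GDDs (of mixed group type, or with an adjoined group) that route around the missing transversal designs. Organizing the case analysis so that every admissible $(g,t)$ is reached by at least one unobstructed construction, while confirming that $2^4$ and $6^4$ are the only unreachable admissible types, is the crux of the argument.
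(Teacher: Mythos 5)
This statement is quoted in the paper as a known result of Brouwer, Schrijver, and Hanani, cited from \cite{Brouweretal:1977}; the paper supplies no proof of it, so there is nothing internal to compare your argument against. Judged on its own, your necessity direction is complete and correct: the two counts give $3\mid g(t-1)$ and $12\mid t(t-1)g^2$, together with $t\geq 4$, and your residue analysis modulo $6$ correctly unpacks these into the four listed cases (e.g.\ for $\gcd(g,6)=1$ one gets $t\equiv 1\pmod 3$ and $t\equiv 0$ or $1\pmod 4$, hence $t\equiv 1$ or $4\pmod{12}$).

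The sufficiency direction, however, has a genuine gap. Identifying type $m^4$ with a TD$(4,m)$ (two MOLS of order $m$) and invoking Wilson's Fundamental Construction to inflate is the right strategy and correctly locates the exceptions $2^4$ and $6^4$, but the actual content of the Brouwer--Schrijver--Hanani theorem is precisely the part you defer: exhibiting, for every admissible pair $(g,t)$ with $t>4$ or with $t=4$ and $g\notin\{2,6\}$, a master design and ingredient GDDs that avoid the missing TD$(4,2)$ and TD$(4,6)$, and proving that $2^4$ and $6^4$ genuinely do not exist (the latter requires an argument beyond the nonexistence of two MOLS of order $6$ as stated --- one must rule out \emph{any} $\{4\}$-GDD of type $6^4$, which is equivalent to a pair of MOLS of order $6$ only after observing that a $\{4\}$-GDD with $4$ groups is necessarily resolvable into transversals; that observation should be made explicit). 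As written, the proposal names the crux (``organizing the case analysis so that every admissible $(g,t)$ is reached'') without carrying it out, so it is an outline of a proof rather than a proof; completing it requires the substantial finite case analysis of the original paper.
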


Given a set system $\fS=(X,\A)$ or GDD $\fS=(X,\G,\A)$,
the {\em block intersection graph} of $\fS$ is a loopless 
multigraph $\Gamma_\fS=(Y,\B)$ such that $Y=\A$ and 
there exists $\lambda$ edges between distinct $A,A'\in Y$ if and only if
$|A\cap A'|=\lambda$.
Let $\chi$ be a function that assigns to each edge between $A,A'\in Y$ a
distinct color in $A\cap A'$.
Then $(\Gamma_\fS,\chi)$
is called the {\em edge-colored block intersection graph} of $\fS$.
A set system or GDD is {\em colorful alternating hamiltonian} (c.a.h.) if its edge-colored
block intersection graph has a c.a.h. cycle. For brevity, a
c.a.h. cycle in the edge-colored block intersection graph
of a set system or GDD is simply referred to as a c.a.h. cycle in the set system or GDD.

\section{$s$-Shift Universal Cycles}

Let $\F$ be a set of combinatorial objects, each of ``rank'' $r$, such that each
$F\in\F$ is specified by a sequence $\langle x_1,x_2,\ldots,x_r\rangle$,
where $x_i\in X$, for some fixed $X$.

\begin{definition}[Universal Cycle \cite{Chungetal:1992}]
$U=(u_0,u_1,\ldots,u_{|\F|-1})$ is a {\em universal cycle} for $\F$ if
$\langle u_{i+1}, u_{i+2},\ldots,u_{i+r}\rangle$, $i\in\bbZ_{|\F|}$, runs
through each element of $\F$ exactly once.
\end{definition}

Given a $k$-uniform set system $\fS=(X,\A)$, it is natural to ask if there exists a universal
cycle for $\fS$, that is, a cycle $(x_0,x_1,\ldots,x_{|\A|-1})$ such that
$\{x_{i+1},x_{i+2},\ldots$, $x_{i+k}\}$ runs through $\A$ exactly once, for $0\leq i<|\A|$. Notice
that for $0\leq i<j<|\A|$, $\{x_{i+1},x_{i+2},\ldots,x_{i+k}\}$ 
and $\{x_{j+1},x_{j+2},\ldots,x_{j+k}\}$ intersect in
$k-(j-i)$ points. In particular, for every $i\in\{0,1,\ldots,k-1\}$,
$\A$ must contain a pair of blocks intersecting in $i$ points. This rules out the
existence of universal cycles for $k$-uniform set systems in which there are no
pairs of blocks that intersect in $j$ points, for some $j\in\{0,1,\ldots,k-1\}$.
In particular, there cannot exist universal cycles for 
Steiner triple systems. 

To overcome the stringent condition for existence of universal cycles
for set systems, Dewar \cite{Dewar:2007}
relaxed the condition by allowing a universal sequence for a $k$-uniform
set system $\fS$ to just generate
some representation of $\fS$, instead of generating all the blocks of $\fS$.
The specific representation considered by Dewar is defined as follows.
Let $\fS=(X,\A)$ be a set system. A set $\R\subseteq\binom{X}{r}$ is said to {\em represent}
$\fS$ if every block in $\A$ contains exactly one element of $\R$, and
every $R\in\R$ is contained in exactly one block of $\A$.

\begin{example}
\label{sts7a}
Consider the {\rm STS}$(7)$ whose blocks are $\{1,3,7\}$, $\{1,5,6\}$, $\{4,5,7\}$, $\{2,6,7\}$,
$\{3,4,6\}$, $\{1,2,4\}$, and $\{2,3,5\}$. This set system can be
represented by 
$R=\{ \{1,3\}$, $\{1,5\}$, $\{5,7\}$, $\{6,7\}$, $\{4,6\}$, $\{2,4\}$, $\{2,3\}\}$, for which there exists 
a universal cycle $(3,1,5,7,6,4,2)$.
\end{example}

One disadvantage with Dewar's approach is that the set system
may not be recoverable from a given universal cycle of its representation, as seen
in the example below.

\begin{example}
\label{sts7b}
Consider the {\rm STS}$(7)$ whose blocks are $\{1,3,4\}$, $\{2,3,6\}$, $\{2,4,7\}$, $\{4,5,6\}$,
$\{1,6,7\}$, $\{3,5,7\}$, and $\{1,2,5\}$. This set system is distinct from that in
Example {\rm \ref{sts7a}}, but has the same representation
$R=\{ \{1,3\}$, $\{1,5\}$, $\{5,7\}$, $\{6,7\}$, $\{4,6\}$, $\{2,4\}$, $\{2,3\}\}$.
\end{example}

Here, the notion of universal cycles is extended in another direction.

\begin{definition}[$s$-Shift Universal Cycle]
Let $s$ be a positive integer.
$U=(u_0$, $u_1,\ldots,u_{s|\F|-1})$ is an {\em $s$-shift universal cycle} for $\F$ if
$\langle u_{si+1},u_{si+2},\ldots,u_{si+r}\rangle$, $i\in\bbZ_{|\F|}$, runs
through each element of $\F$ exactly once.
\end{definition}

A $1$-shift universal cycle is equivalent to the universal cycle of 
Chung {\em et al.} \cite{Chungetal:1992}.

\begin{example}
$U=(1,3,7,2,6,4,3,5,2,1,4,7,5,6)$ is a $2$-shift universal cycle for the {\rm STS}$(7)$
in Example {\rm \ref{sts7a}}. The blocks of the {\rm STS}$(7)$ can be recovered from $U$.
\end{example}

The next result gives the equivalence between certain shift universal cycles
and alternating hamiltonian cycles.

\begin{proposition}
For $k\geq 2$,
a $(k-1)$-shift universal cycle for a $k$-uniform set system $\fS$ is equivalent to an
alternating hamiltonian cycle in $\fS$.
\end{proposition}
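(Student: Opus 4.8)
The plan is to unwind the two definitions and exhibit a direct, reversible dictionary between the two objects, so that the proposition becomes a matter of translation rather than of any real combinatorial work. Write $\fS=(X,\A)$ and set $s=k-1$, $r=k$, so that a $(k-1)$-shift universal cycle is a cyclic sequence $U=(u_0,\ldots,u_{(k-1)|\A|-1})$ whose windows $B_i=\{u_{(k-1)i+1},\ldots,u_{(k-1)i+k}\}$, $i\in\bbZ_{|\A|}$, run through $\A$ exactly once. The crucial numerical observation I would record first is that consecutive windows overlap in exactly $r-s=k-(k-1)=1$ position: the last slot $(k-1)i+k$ of window $i$ is the first slot $(k-1)(i+1)+1$ of window $i+1$. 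Since the windows run through $\A$ without repetition, the $B_i$ are $|\A|$ distinct vertices of $\Gamma_\fS$, and consecutive blocks $B_i,B_{i+1}$ share the point $u_{(k-1)i+k}\in B_i\cap B_{i+1}$; hence $B_0,B_1,\ldots,B_{|\A|-1},B_0$ traces a hamiltonian cycle in $\Gamma_\fS$.

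For the colouring I would assign to the cycle edge $\{B_i,B_{i+1}\}$ the colour $u_{(k-1)i+k}$. A one-line index computation then shows that the two cycle edges incident with $B_i$ receive the colours $u_{(k-1)i+1}$ and $u_{(k-1)i+k}$, that is, the first and last entries of the window spelling $B_i$. Because $B_i\in\binom{X}{k}$ and $k\geq 2$, the $k$ entries of this window are pairwise distinct, so the two colours at $B_i$ differ, and the hamiltonian cycle is alternating. This is the forward direction: from $U$ I build an alternating hamiltonian cycle. Conversely, given an alternating hamiltonian cycle $B_0,\ldots,B_{|\A|-1},B_0$ with edge $\{B_i,B_{i+1}\}$ coloured $c_i\in B_i\cap B_{i+1}$, the alternating condition at $B_i$ is exactly $c_{i-1}\neq c_i$. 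Both $c_{i-1}$ and $c_i$ lie in $B_i$, so I would spell out $B_i$ as a window beginning with $c_{i-1}$, ending with $c_i$, and listing the remaining $k-2$ points of $B_i$ in any order in between; placing these windows at offsets $(k-1)i$ glues them consistently, since the last entry $c_i$ of window $i$ and the first entry $c_{(i+1)-1}=c_i$ of window $i+1$ occupy the same slot. The resulting cyclic sequence of length $(k-1)|\A|$ is a $(k-1)$-shift universal cycle whose windows are precisely $B_0,\ldots,B_{|\A|-1}$.

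Putting the two constructions together gives the equivalence, the maps being mutually inverse up to the harmless freedom of ordering the $k-2$ interior entries of each window and the usual rotation and reflection of the cycle. I expect the only delicate point to be the index bookkeeping: one must verify carefully that the single overlap position of consecutive windows is simultaneously the last slot of one block and the first slot of the next, and that under this identification the alternating condition $c_{i-1}\neq c_i$ coincides precisely with the statement that each window lists $k$ distinct symbols. Everything else is a routine unwinding of the definitions of the $(k-1)$-shift universal cycle and of the edge-colored block intersection graph $(\Gamma_\fS,\chi)$.
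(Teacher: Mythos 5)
Your proof is correct and follows essentially the same route as the paper's: in both directions you set up the same dictionary between windows of the sequence and blocks of the cycle, colour the edge $\{B_i,B_{i+1}\}$ by the single overlapping symbol, and in the converse spell each block as a window whose first and last entries are the colours of its two incident cycle edges, gluing with one-symbol overlap. If anything, your justification of the alternating condition (the first and last entries of a window are distinct because a window lists the $k$ distinct points of a $k$-set) is slightly more explicit than the paper's, which only remarks that the two colours both lie in $A_{i+1}$.
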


\begin{proof}
Suppose $U=(u_0,u_1,\ldots,u_{m(k-1)-1})$ is a $(k-1)$-shift universal cycle 
for a $k$-uniform set system $\fS$. Let $A_i = \{u_{i(k-1)},u_{i(k-1)+1},\ldots,u_{i(k-1)+(k-1)}\}$,
$i\in\bbZ_{m}$. Then $\{A_0,A_1\},\{A_1,A_2\},\ldots,\{A_{m-1},A_0\}$ are edges of an
alternating hamiltonian cycle in $\fS$, noting that the color of edge $\{A_i,A_{i+1}\}$ is
$u_{(i+1)(k-1)}$, $i\in\bbZ_m$. Hence $\{A_i,A_{i+1}\}$ and $\{A_{i+1},A_{i+2}\}$ cannot 
possibly be of the same color since $u_{(i+1)(k-1)}$ and $u_{(i+2)(k-1)}$ are both contained
in the $A_{i+1}$.

Conversely, suppose that $\A=\{A_0,A_1,\ldots,A_{m-1}\}$ is the set of blocks
of a $k$-uniform set system $\fS$, and that 
$\{A_0,A_1\},\{A_1,A_2\},\ldots,\{A_{m-1},A_0\}$
are edges of an alternating hamiltonian cycle in $\fS$.
For $i\in\bbZ_m$, order the points within $A_i$ so that 
\begin{enumerate}[(i)]
\item its first point is the color of the edge $\{A_{i-1},A_i\}$, and
\item its last point is the color of the edge $\{A_i,A_{i+1}\}$.
\end{enumerate}
According to this order, construct the sequence obtained by listing down the points of $A_0,A_1,\ldots,A_{m-1}$, with the condition that only one of the last point of $A_i$
and the first point of $A_{i+1}$ is included in the listing. This sequence is a $k$-shift
universal cycle for $\fS$.
\end{proof}

In the next section, the existence of minimum $(n,3,2)$-coverings that admit
$2$-shift universal cycles, is established
for all $n\geq 3$, by considering their alternating hamiltonian cycles.

\section{Alternating Hamiltonian Cycles in Edge-Colored Block Intersection Graphs}
\label{AH}

Some useful recursive constructions are first described.

\subsection{Recursive Constructions} 

We begin with a simple observation.

\begin{proposition}
\label{C+C}
Let $(\Gamma,\chi)$ be an edge-colored block intersection graph,
and let $C_1$ and $C_2$ be two
(vertex) disjoint alternating cycles, of length $m_1$ and $m_2$, respectively.
If there is an edge in $C_1$ of the same color as some edge in $C_2$, then
$\Gamma$ has an alternating cycle of length $m_1+m_2$.
\end{proposition}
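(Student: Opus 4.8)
The plan is to merge $C_1$ and $C_2$ into one alternating cycle by a cut-and-paste performed at the two equally-colored edges. First I would name the shared color $c$ and write the offending edges as $e_1=\{a_0,a_1\}$ in $C_1$ and $e_2=\{b_0,b_1\}$ in $C_2$, so that $\chi(e_1)=\chi(e_2)=c$. The crucial observation, which is exactly where the block-intersection-graph structure of $\Gamma$ enters, is that the color of an edge is by definition a point lying in both of its endpoint blocks. Hence $c\in a_0\cap a_1$ and $c\in b_0\cap b_1$, and therefore $c$ belongs to all four blocks $a_0,a_1,b_0,b_1$ simultaneously. This immediately gives $c\in a_0\cap b_0$ and $c\in a_1\cap b_1$, so $\Gamma$ contains edges joining $a_0$ to $b_0$ and $a_1$ to $b_1$, and among the parallel edges at each such pair I may select the one of color $c$.

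Next I would splice the two cycles: delete $e_1$ and $e_2$, which leaves two vertex-disjoint paths, $P_1$ from $a_0$ to $a_1$ (through the surviving vertices of $C_1$) and $P_2$ from $b_0$ to $b_1$ (through the surviving vertices of $C_2$), and then rejoin them with the two new $c$-colored edges to form the closed walk $a_0\,[P_1]\,a_1\,b_1\,[P_2]\,b_0\,a_0$. Since $C_1$ and $C_2$ are vertex-disjoint and each has length at least $2$ (the graph being loopless), this walk visits each of the $m_1+m_2$ vertices exactly once and so is a cycle of the required length.

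The step that actually needs checking — and the only place an obstruction could hide — is that this new cycle is genuinely alternating. At every internal vertex of $P_1$ or $P_2$ the incident pair of edges is inherited unchanged from $C_1$ or $C_2$, so alternation there is immediate. At each of the four junction vertices $a_0,a_1,b_0,b_1$, one incident edge is new (color $c$) while the other is the $C_i$-edge that survived the deletion; because $C_1$ and $C_2$ were alternating and the deleted edges had color $c$, every surviving junction edge carries a color different from $c$, so no monochromatic adjacency is created. Finally I would confirm that the two new edges, both of color $c$, are never adjacent to one another: in the cyclic order they are separated by the nonempty paths $P_1$ and $P_2$, so this last potential clash cannot arise. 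I would also remark that the degenerate case $m_i=2$ (a digon) is handled by the very same argument, since deleting one of its two edges still leaves a single-edge path with the correct endpoints.
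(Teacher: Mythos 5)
Your argument is correct and is essentially the paper's own proof: the paper also deletes the two equally-colored edges $\{a,b\}$ and $\{c,d\}$ and reconnects via the cross edges $\{a,c\}$ and $\{b,d\}$ of that shared color. You simply make explicit the details the paper leaves implicit — why the cross edges of the required color exist in the block intersection graph, and why alternation survives at the four junction vertices.
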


\begin{proof}
Suppose $A=\{a,b\}$ and $B=\{c,d\}$ are edges in $C_1$ and $C_2$, respectively, such
that $\chi(A)=\chi(B)$. The $(C_1\setminus \{a,b\}) \cup (C_2\setminus\{c,d\}) \cup
\{\{a,c\},\{b,d\}\}$ is an alternating cycle of length $m_1+m_2$.
\end{proof}

\begin{proposition}[Filling in Groups]
\label{fillin}
If there exists a c.a.h. $\{3\}$-{\rm GDD} of type $[g_1,g_2,\ldots,g_t]$, and
for each $i\in[t]$, there exists a c.a.h. $(g_i,3,2)$-covering, then there exists a
c.a.h. $(\sum_{i=1}^t g_i,3,2)$-covering.
\end{proposition}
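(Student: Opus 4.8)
The plan is to take the union of all the given block sets as the underlying covering, and then to stitch the alternating hamiltonian cycles supplied by the hypotheses into a single such cycle using Proposition \ref{C+C} as the only gluing tool.

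First I would set up the combined object. Let $(X,\G,\A_0)$ be the hypothesized c.a.h.\ $\{3\}$-GDD with groups $\G=\{G_1,\ldots,G_t\}$, and for each $i\in[t]$ let $(G_i,\A_i)$ be a c.a.h.\ $(g_i,3,2)$-covering. Put $\A=\A_0\cup\A_1\cup\cdots\cup\A_t$ and $\fS=(X,\A)$. I would first check that $\fS$ is an $(\sum_{i=1}^{t} g_i,3,2)$-covering: every pair inside a group $G_i$ is covered by $\A_i$, and every pair meeting two distinct groups is covered by the GDD blocks $\A_0$. The block sets are pairwise disjoint, since each GDD block meets a group in at most one point while each $\A_i$-block lies wholly inside $G_i$; consequently the edge-colored block intersection graph $\Gamma_\fS$ contains the edge-colored block intersection graphs of the GDD and of the $t$ coverings as vertex-disjoint subgraphs, with the colors (intersection points) agreeing on shared edges.

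Next I would invoke the hypotheses to obtain alternating hamiltonian cycles $C_0,C_1,\ldots,C_t$ in these respective subgraphs, where $C_0$ visits every GDD block and $C_i$ visits every block of $\A_i$. Being vertex-disjoint alternating cycles in the single graph $\Gamma_\fS$, they can be merged one at a time by Proposition \ref{C+C}: to fold $C_i$ into the cycle already assembled around $C_0$, it suffices to exhibit a color common to the two. Here the colorful property does the work. Since $C_0$ is colorful, it realizes every point lying in at least two GDD blocks; and since $C_i$ is colorful it uses some point $x\in G_i$ as a color. Because $x$ lies in exactly $(n-g_i)/2$ GDD blocks, provided $n-g_i\geq 4$ this same $x$ is also a color of $C_0$, giving the required match. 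Proposition \ref{C+C} then splices $C_i$ in at a pair of equally colored edges while preserving the alternating property, and the two new edges it introduces carry that same color, so no color present before the splice is lost. After $t$ applications the result is a single alternating hamiltonian cycle through all of $\A$.

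The main obstacle is not this structural merging but the colorful requirement for the final cycle. Merging preserves every color already present, so the merged cycle carries every color appearing in any $C_j$; what remains is to verify that these exhaust the color set of $\fS$, i.e.\ that every point lying in two or more blocks of $\A$ already lies in two or more blocks of a single piece. A point $x\in G_i$ lies in $(n-g_i)/2$ GDD blocks, so the only way it could escape being witnessed within one piece is to lie in exactly one GDD block and one $\A_i$-block, which forces $n-g_i=2$. I would rule this out by checking that the existence conditions for $\{3\}$-GDDs (Theorem \ref{CHR}) preclude the corresponding degenerate types, so that $C_0$ alone is already colorful on $X$ and supplies the color matches for every splice. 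Carrying out this colorful bookkeeping, together with the few genuinely small cases (notably $g_i=3$, where the covering is a single block admitting no cycle and must be inserted directly), is the crux of the argument.
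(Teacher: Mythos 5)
Your proof is correct and follows essentially the same route as the paper's: form the union of the block sets, then splice the alternating hamiltonian cycle of each group covering into the colorful alternating hamiltonian cycle of the master GDD via Proposition~\ref{C+C}, using the fact that each point of a group is a color realized by the master cycle. The paper's own proof is exactly this, stated more tersely and without the explicit colorfulness bookkeeping or the degenerate-case caveats you raise.
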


\begin{proof}
Let $\D=(X,\G,\A)$ be a c.a.h. $\{3\}$-GDD of type $[g_1,g_2,\ldots,g_t]$, with $C$
as a c.a.h. cycle in $\D$.
For each group $G\in\G$, let $\D_G=(G,\B_G)$ be a c.a.h. $(|G|,3,2)$-covering, with $C_G$
as a c.a.h. cycle in $\D_G$. It is clear that $\D^*=(X,\A\cup(\cup_{G\in\G}\B_G))$ is
a $(\sum_{i=1}^t g_i,3,2)$-covering. Each of $C_G$ contains an edge of the same color
as some edge in $C$, so these can be combined with $C$ via Proposition \ref{C+C} to
give one c.a.h. cycle for $\D^*$.
\end{proof}

The following is another useful construction for c.a.h. $(n,3,2)$-coverings from GDDs.

\begin{proposition}[Adjoining $y$ Points and Filling in Groups]
\label{adjoin}
Let $y\in\bbZ_{\geq 0}$. Suppose there exists a (master) c.a.h. $\{3\}$-{\rm GDD} of type
$[g_1,g_2,\ldots,g_t]$, and suppose the following (ingredients) also exist:
\begin{enumerate}[{\rm (i)}]
\item a c.a.h. $(g_t+y,3,2)$-covering,
\item a c.a.h. $\{3\}$-{\rm GDD} of type $1^{g_i} y^1$, for $i\in[t-1]$.
\end{enumerate}
Then there exists a c.a.h. $(y+\sum_{i=1}^t g_i,3,2)$-covering.
\end{proposition}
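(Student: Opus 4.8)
The plan is to adjoin a set $Y$ of $y$ fresh points to the point set of the master GDD, lay the ingredient structures over the groups so that their blocks jointly cover every pair of the enlarged point set, and then weld their individual c.a.h. cycles together by repeated use of Proposition~\ref{C+C}.

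First I would set up the covering. Write the master $\{3\}$-GDD as $\D=(X,\G,\A)$ with $\G=\{G_1,\dots,G_t\}$, $|G_i|=g_i$, and fix a set $Y$ disjoint from $X$ with $|Y|=y$. On $G_t\cup Y$ place the ingredient c.a.h. $(g_t+y,3,2)$-covering $(G_t\cup Y,\B_t)$, and for each $i\in[t-1]$ place the ingredient c.a.h. $\{3\}$-GDD of type $1^{g_i}y^1$ on $G_i\cup Y$ (the singletons of $G_i$ together with the single group $Y$), calling its block set $\B_i$. Set $\D^*=(X\cup Y,\A\cup\bigcup_{i=1}^t\B_i)$. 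A short case check shows every $2$-subset of $X\cup Y$ is covered: cross-group pairs inside $X$ come from $\A$; pairs inside $G_t$, inside $Y$, and between $G_t$ and $Y$ come from $\B_t$; and for $i<t$ the pairs inside $G_i$ and between $G_i$ and $Y$ come from $\B_i$, the within-$Y$ pairs of that GDD being precisely the ones already supplied by $\B_t$. I would also note that the block sets are pairwise disjoint: a master block meets three distinct groups and so lies in no single $G_i\cup Y$, while every $\B_i$ block meets $Y$ in at most one point and so cannot coincide with a block of another ingredient. Hence $\D^*$ genuinely has $|\A|+\sum_i|\B_i|$ distinct blocks.

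Next I would assemble the c.a.h. cycle. Let $C$, $C_t$, $C_1,\dots,C_{t-1}$ be the c.a.h. cycles of the master GDD, the covering, and the type-$1^{g_i}y^1$ GDDs, respectively; by the previous paragraph these are vertex-disjoint alternating cycles in $\Gamma_{\D^*}$. Assuming $y\geq1$, every one of $C_t,C_1,\dots,C_{t-1}$ is colorful over a point set containing $Y$, so any two of them share a point of $Y$ as a common edge-color; Proposition~\ref{C+C} then merges them, one at a time, into a single alternating cycle $C^*$ whose color set is the union $\bigcup_{i=1}^t(G_i\cup Y)=X\cup Y$. Finally, since $C^*$ uses every point of $X$ as a color while $C$ uses at least one point of $X$ (each point of the master GDD lies in at least two of its blocks, hence is a color), $C$ and $C^*$ share a color, and one more application of Proposition~\ref{C+C} fuses them. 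The resulting cycle is alternating, visits every block of $\D^*$, and has color set $X\cup Y$, so it is colorful; thus $\D^*$ is c.a.h. The case $y=0$ needs no new idea, since each type-$1^{g_i}y^1$ GDD then reduces to an $\mathrm{STS}(g_i)$ and the statement collapses to Proposition~\ref{fillin}.

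The step I expect to be most delicate is guaranteeing the common colors demanded by Proposition~\ref{C+C}, and this is exactly where the two design choices pay off: the adjoined points $Y$ furnish the colors shared between the covering and the transversal GDDs, while colorfulness of each ingredient forces those colors to actually appear on its cycle. The attendant nuisance is the book-keeping of degenerate small cases—a master GDD with too few blocks, or an ingredient of order three, where a point could fail to be a color—which I would dispatch by observing that the GDD-existence constraints of Theorem~\ref{CHR} preclude precisely the tiny types in which this breaks down, so that every relevant point is in truth a color.
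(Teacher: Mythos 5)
Your construction of the covering is exactly the paper's, and your cycle-welding argument is the same mechanism the paper uses (repeated application of Proposition~\ref{C+C}, which it compresses into the instruction to ``mimic the proof of Proposition~\ref{fillin}''); the only cosmetic difference is that you first fuse the ingredient cycles to one another through shared colors in $Y$ before attaching the master cycle, whereas the paper attaches each ingredient cycle to the master cycle directly through a color in the corresponding group. Both orderings are valid, so your proof is correct and essentially identical in approach to the paper's.
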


\begin{proof}
Let $(X,\G,\A)$ be a c.a.h. $\{3\}$-GDD of type $[g_1,g_2,\ldots,g_t]$, with
$\G=\{G_1,G_2$, $\ldots,G_t\}$ such that $|G_i|=g_i$, $i\in[t]$. Let $Y$ be a set
of size $y$, disjoint from $X$. Let $(G_t\cup Y,\B_t)$ be a c.a.h. $(g_t+y,3,2)$-covering
and $(G_i\cup Y,\{\{x\}:x\in G_i\}\cup\{Y\},\B_i)$ be a c.a.h. $\{3\}$-GDD of type
$1^{g_i} y^1$, for $i\in[t-1]$. Then the set system
\begin{equation*}
\fS = (X\cup Y, \A\cup(\cup_{i=1}^t \B_i))
\end{equation*}
is a $(y+\sum_{i=1}^t g_i,3,2)$-covering. To show that $\fS$ is c.a.h., mimic
the proof of Proposition \ref{fillin}.
\end{proof}

The following construction is similar to that in Proposition \ref{adjoin}, except that we end up
with a GDD with smaller groups instead of an $(n,3,2)$-covering.

\begin{proposition}[Adjoining $y$ Points and Breaking Up Groups]
\label{break}
Let $y\in\bbZ_{\geq 0}$. Suppose there exists a (master) c.a.h. $\{3\}$-{\rm GDD} of type
$[g_1,g_2,\ldots,g_t]$, and suppose that an (ingredient) 
c.a.h. $\{3\}$-{\rm GDD} of type $h^{g_i/h} y^1$ exists for each $i\in[t]$.
Then there exists a c.a.h. $\{3\}$-GDD of type $h^{(\sum_{i=1}^t g_i)/h} y^1$.
\end{proposition}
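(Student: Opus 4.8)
The plan is to imitate the Wilson-type constructions of Propositions~\ref{fillin} and~\ref{adjoin}. Let $\D=(X,\G,\A)$ be the master c.a.h. $\{3\}$-GDD of type $[g_1,g_2,\ldots,g_t]$, with $\G=\{G_1,G_2,\ldots,G_t\}$ and $\abs{G_i}=g_i$, and let $C$ be a c.a.h. cycle in $\D$. Take a set $Y$ of $y$ new points, disjoint from $X$. For each $i\in[t]$, partition $G_i$ into $g_i/h$ subsets of size $h$, and on the point set $G_i\cup Y$ place a copy of the ingredient c.a.h. $\{3\}$-GDD of type $h^{g_i/h}y^1$ so that its group of size $y$ is $Y$ and its groups of size $h$ are exactly these subsets of $G_i$; write its block set as $\B_i$ and a c.a.h. cycle in it as $C_i$. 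The candidate object is
\begin{equation*}
\fS=(X\cup Y,\G^\ast,\A\cup(\cup_{i=1}^t\B_i)),
\end{equation*}
where $\G^\ast$ consists of all the size-$h$ subsets used in partitioning $G_1,\ldots,G_t$, together with the single group $Y$.

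First I would check that $\fS$ is a $\{3\}$-GDD of type $h^{(\sum_{i=1}^t g_i)/h}y^1$. Every block has size three, and every block meets each group of $\G^\ast$ in at most one point: a master block respects the coarser partition $\G$, hence its refinement $\G^\ast$, and a block of $\B_i$ meets each group of the ingredient GDD (namely the relevant size-$h$ groups and $Y$) in at most one point, so no within-group pair is ever covered. For the remaining pairs $\{u,v\}$ I would verify coverage by cases: if $u,v$ lie in one $G_i$ but in different size-$h$ subgroups, the pair is covered exactly once within $\B_i$; if $u\in G_i$ and $v\in G_j$ with $i\neq j$, it is covered exactly once by $\A$ and by no $\B_k$ (each of which lives on a single $G_k\cup Y$); and if $u\in G_i$, $v\in Y$, it is covered exactly once within $\B_i$. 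Counting then gives $\sum_{i=1}^t(g_i/h)=(\sum_{i=1}^t g_i)/h$ groups of size $h$ and one group of size $y$, which is the claimed type.

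It remains to show $\fS$ is c.a.h., and here I would mimic the proof of Proposition~\ref{fillin}. The block sets $\A,\B_1,\ldots,\B_t$ are pairwise disjoint, so $C,C_1,\ldots,C_t$ are vertex-disjoint alternating cycles in the edge-colored block intersection graph of $\fS$, and I would fuse them into a single colorful alternating hamiltonian cycle by repeated application of Proposition~\ref{C+C}. Each such application preserves every color already present, which helps the bookkeeping: when $y\geq 1$ and the groups are large enough that the points of $Y$ recur across blocks, every $C_i$ carries a color from $Y$, so once one ingredient cycle has been attached the running cycle carries $Y$-colors and each remaining $C_i$ can be attached through a shared point of $Y$. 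The hard part — the only content beyond routine verification — is securing the fusion of the master cycle $C$ with an ingredient cycle: since $C$ uses only colors from $X$, the shared color must be a point of some $G_i$, and I must guarantee that such a point lies simultaneously in at least two blocks of $\A$ (so that it is a color on $C$) and in at least two blocks of $\B_i$ (so that it is a color on $C_i$). This holds in every non-degenerate case, where the parameters are large enough that points of $G_i$ repeat across blocks; establishing this shared color, and ordering the fusions so that each cycle is attached through a color already present, is the step that needs care.
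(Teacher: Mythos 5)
Your construction is exactly the paper's: place each ingredient GDD on $G_i\cup Y$ with $Y$ as its group of size $y$, take the union of the master's blocks with the $\B_i$ under the refined group set, and fuse the cycles $C,C_1,\ldots,C_t$ via Proposition~\ref{C+C}, mimicking the proof of Proposition~\ref{fillin}. The shared-color point you flag at the end is settled by colorfulness of $C$ and $C_i$ (any point of $G_i$ lying in two blocks of $\A$ and in two blocks of $\B_i$ supplies the common color, which holds whenever the hypotheses are non-vacuous), and the paper's own proof is no more explicit about this than you are.
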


\begin{proof}
Let $(X,\G,\A)$ be a c.a.h. $\{3\}$-GDD of type $[g_1,g_2,\ldots,g_t]$, 
with $\G=\{G_1,G_2$, $\ldots,G_t\}$.
Let $Y$ be a set of size $y$, disjoint from $X$. Let $(G_i\cup Y,\G_i,\B_i)$ be a c.a.h.
$\{3\}$-GDD of type $h^{|G_i|/h} y^1$, with $Y$ as the group of size $y$. Then the set system
\begin{equation*}
\fS = (X\cup Y,\cup_{i=1}^t \G_i,\A\cup(\cup_{i=1}^t \B_i))
\end{equation*}
is a $\{3\}$-GDD of type $h^{(\sum_{i=1}^t g_i)/h} y^1$. To show that $\fS$ is c.a.h., mimic
the proof of Proposition \ref{fillin}.
\end{proof}



\begin{figure}
\caption{Wilson's Fundamental Construction for GDDs.}
\label{Wilson}
\centering
\begin{tabular}{ l l}
\\
\hline
Input: & (master) GDD $\D=(X,\G,\A)$; \\
           & weight function $\omega:X\rightarrow\mathbb{Z}_{\geq 0}$; \\
           & (ingredient) $K$-GDD $\D_A=(X_A,\G_A,\B_A)$ of type
           $[\omega(a):a\in A]$, \\ 
           & for each block $A\in\A$, where \\
           & ~~~~~~~~~$X_A=\cup_{a\in A}\{\{a\} \times \{1,2,\ldots,\omega(a)\}\}$, and \\
           & ~~~~~~~~~$\G_A=\{\{a\}\times\{1,2,\ldots,\omega(a)\}:a\in A\}$. \\
Output: & $K$-GDD $\D^*=(X^*,\G^*,\A^*)$ of type $[\sum_{x\in G}\omega(x):G\in\G]$, where \\
              & ~~~~~~~~~$X^*=\cup_{x\in X}(\{x\}\times\{1,2,\ldots,\omega(x)\})$, \\
              & ~~~~~~~~~$\G^*=\{\cup_{x\in G}(\{x\}\times\{1,2,\ldots,\omega(x)\}) : G\in\G\}$, and \\
              & ~~~~~~~~~$\A^*=\cup_{A\in\A}\B_A$. \\
Notation: & $\D^*={\rm WFC}(\D,\omega,\{\D_A:A\in\A\})$. \\
Note: & By convention, for $x\in X$, $\{x\}\times\{1,2,\ldots,\omega(x)\}=\emptyset$ if $\omega(x)=0$.\\
\hline
\end{tabular}
\end{figure}

For Propositions \ref{fillin}, \ref{adjoin}, and \ref{break}
 to be useful, large classes of c.a.h. $\{3\}$-GDDs are needed.
These can be produced with
the next theorem, a direct analogue of
Wilson's Fundamental Construction for GDDs \cite{Wilson:1972a}
(shown in Fig. \ref{Wilson}), for c.a.h. GDDs.

\begin{theorem}[Fundamental Construction]
\label{WFC}
Let $\D=(X,\G,\A)$ be a (master) GDD,
and $\omega:X\rightarrow\bbZ_{\geq 0}$ be a weight function.
Suppose that for each $A\in\A$, there exists an (ingredient) c.a.h.
$K$-{\rm GDD} $\D_A$
of type $[ \omega(a):a\in A]$. Then there exists a 
c.a.h. $K$-{\rm GDD} $\D^*$
of type $[\sum_{x\in G}\omega(x): G\in\G]$. 
\end{theorem}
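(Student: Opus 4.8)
The plan is to build the desired colorful alternating hamiltonian structure on the output GDD $\D^*$ by combining the c.a.h.\ cycles supplied on each ingredient GDD $\D_A$, in exact parallel with the way Wilson's Fundamental Construction assembles the blocks of $\D^*$ as the disjoint union $\A^* = \cup_{A\in\A}\B_A$. First I would record the two features that make this possible. On the one hand, the output vertex set $X^*$ is partitioned into the copies $X_A$ (one per master block $A$), and the blocks lying in distinct $X_A$'s are \emph{vertex-disjoint} as subsets of $X^*$, so in the block intersection graph $\Gamma_{\D^*}$ the vertices coming from different ingredient GDDs are never joined by an edge. Consequently the cycles $C_A$ (the c.a.h.\ cycle guaranteed inside each $\D_A$) are pairwise vertex-disjoint alternating cycles sitting inside $\Gamma_{\D^*}$. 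On the other hand, I would check that each $C_A$ remains a \emph{colorful} alternating cycle when viewed inside $\Gamma_{\D^*}$: a color used in $\D_A$ is a point of $X_A\subseteq X^*$, and since the color set of $\D^*$ is exactly $X^*=\cup_A X_A$, colorfulness across all of $\D^*$ will follow automatically once every color is hit by \emph{some} $C_A$.

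The core of the argument is then an iterated application of Proposition \ref{C+C}. The idea is to merge the disjoint alternating cycles $\{C_A : A\in\A\}$ one pair at a time into a single alternating cycle, using at each merge a pair of edges of matching color. So the key combinatorial step is to verify that the collection of cycles is \emph{connected through shared colors}: whenever I have already merged some subfamily into one alternating cycle $C'$ and wish to absorb a not-yet-merged $C_A$, there must exist an edge of $C_A$ whose color also appears on some edge of $C'$. A color is a point $x\in X^*$, and the edges carrying color $x$ correspond to pairs of output blocks meeting in $x$. Since $x$ lies in a unique group $G$ of $\D^*$ and, within the master design, the point $x$ (as an element of some $X_A$, hence of $\{x'\}\times\{1,\dots,\omega(x')\}$ for a master point $x'$) is covered by the various blocks $A'\in\A$ through $x'$, the ingredient cycles that touch color $x$ are exactly those $C_{A'}$ with $x'\in A'$. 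I would show these form a connected ``color graph'' whose vertices are the ingredient cycles and whose edges record shared colors, so that the greedy merging never gets stuck.

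Concretely I would carry out the proof in the following order. \textbf{Step 1:} establish the vertex-disjointness of the $C_A$ inside $\Gamma_{\D^*}$ and note each is alternating and colorful within itself. \textbf{Step 2:} define the auxiliary graph $H$ on vertex set $\A$ with $A \sim A'$ whenever $\D_A$ and $\D_{A'}$ share a color (equivalently, when the master blocks $A,A'$ meet in a master point of positive weight), and verify $H$ is connected---this reduces to connectivity of the point--block incidence structure of the master GDD restricted to positively weighted points, which follows from the master design having every relevant pair covered. \textbf{Step 3:} process the edges of a spanning tree of $H$, and for each tree edge invoke Proposition \ref{C+C} to splice the two current alternating cycles into one, so that after $|\A|-1$ merges all the $C_A$ have been amalgamated into a single alternating hamiltonian cycle $C^*$ on all of $\Gamma_{\D^*}$. \textbf{Step 4:} confirm $C^*$ is colorful for $\D^*$, which is immediate since its edge set contains all the $C_A$ and every color of $\D^*$ appears in some $C_A$.

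The hard part will be Step 2, and more precisely guaranteeing that the color-sharing graph $H$ really is connected rather than merely establishing that adjacent merges are possible. The subtlety is that two ingredient designs share a color only through a \emph{common master point of positive weight}; if some master blocks meet only in zero-weight points (so that the corresponding $X_A$ portions vanish), the naive incidence connectivity could break. I expect this to be handled by observing that any color actually present in $\D^*$ forces the relevant master points to have positive weight, together with the standard fact that the blocks through a fixed point, and the points of a fixed group, chain together enough overlap to keep $H$ connected; should a direct argument prove awkward, a cleaner route is to merge cycles group-by-group first (all $C_A$ touching a fixed output group are color-connected through that group's points) and then merge across groups using blocks that straddle two groups. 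I would present whichever of these gives the most transparent connectivity certificate.
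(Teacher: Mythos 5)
Your proposal matches the paper's proof: both view the ingredient c.a.h.\ cycles $C_A$ as vertex-disjoint alternating cycles in the block intersection graph of $\D^*$ and amalgamate them into a single cycle by iterating Proposition \ref{C+C}, with the merging guaranteed to terminate in one cycle because any two colors of $\D^*$ in different groups co-occur in some block, hence in a single colorful ingredient cycle. The paper merges greedily rather than along a spanning tree of your auxiliary graph $H$, but that is a cosmetic difference, and your extra care about zero-weight master points only sharpens a step the paper treats tersely.
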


\begin{proof}
Apply Wilson's Fundamental Construction (Fig. \ref{Wilson}) to obtain a $K$-GDD $\D^*$
of type $[\sum_{x\in G}\omega(x): G\in\G]$. That this GDD is c.a.h. can be seen as follows.
Let $C_A$ be a 
c.a.h. cycle in the $K$-GDD $\D_A$, $A\in\A$. 
There exists an edge
in each of $C_A$ and $C_{A'}$ of the same color if $A\cap A'\not=\emptyset$, and
$C_A$ and $C_{A'}$ can be combined into one
c.a.h. cycle (with respect to colors
in $A\cap A'$) via Proposition \ref{C+C}.

The construction for a c.a.h.
cycle in $\D^*$ proceeds as follows. Start with the set of
cycles $\C=\{C_A:A\in\A\}$. As long as $\C$ contains more than one cycle, choose two
cycles in $\C$, each containing an edge of the same color, and combine them. This can always
be done unless the set $\C$ is reduced to a set of cycles, each containing edge colors that appear in no other cycles. However, this is impossible,
since every pair of points (which corresponds to colors of edges) in $\D^*$ appears in
some block, and hence some cycle of $\C$. The result is therefore a
c.a.h. cycle in $\D^*$.
\end{proof}

To seed the recursive constructions above, some small c.a.h. set systems are required.
These are given in the next subsection.

\subsection{Small Orders} \hfill

\begin{proposition}
\label{sts}
There exists a c.a.h. {\rm STS}$(n)$ for
$n\in\{3,7,9,13,15\}$.
\end{proposition}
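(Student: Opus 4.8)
The plan is to verify the claim for each of the five orders $n\in\{3,7,9,13,15\}$ directly, since these are all small enough to handle by explicit construction or computer search. For each such $n$, I must exhibit an STS$(n)$ together with an alternating hamiltonian cycle in its edge-colored block intersection graph that is also \emph{colorful}, meaning every color (every point of $X$, or more precisely every pair, depending on how $\chi$ assigns colors) appears on some edge of the cycle. Concretely, a c.a.h.\ cycle is a cyclic ordering $A_0,A_1,\ldots,A_{m-1}$ of all $m = n(n-1)/6$ blocks such that consecutive blocks $A_i,A_{i+1}$ intersect, the point chosen as $\chi(\{A_i,A_{i+1}\})$ differs from the point chosen for $\chi(\{A_{i+1},A_{i+2}\})$ (the alternating condition), and the set of chosen intersection-points is colorful.

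The first step is the trivial base case $n=3$: here the STS consists of a single block $\{1,2,3\}$, the block intersection graph is a single vertex, and the degenerate cycle on one vertex is vacuously c.a.h.; I would note this and the accompanying $2$-shift universal cycle explicitly. For $n=7$, I would simply reuse the STS$(7)$ of Example~\ref{sts7a}, whose $2$-shift universal cycle $(1,3,7,2,6,4,3,5,2,1,4,7,5,6)$ is already displayed in the excerpt; by the Proposition relating $(k-1)$-shift universal cycles to alternating hamiltonian cycles (with $k=3$), this universal cycle \emph{is} an alternating hamiltonian cycle, and reading off the colors confirms colorfulness. This discharges two of the five cases with essentially no new work.

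For the remaining orders $n\in\{9,13,15\}$ — having $m=12,26,35$ blocks respectively — my approach would be to produce an explicit STS together with an explicit c.a.h.\ cycle, either by hand (exploiting the cyclic automorphism of the standard difference-family constructions, e.g.\ the point set $\bbZ_n$ with base blocks developed mod $n$) or, more reliably, by a short computer search. The search is lightweight: fix one of the known STS$(n)$, build its edge-colored block intersection graph, and run a backtracking hamiltonian-cycle routine that enforces the alternating constraint at each extension step and checks colorfulness at the end. Since the block intersection graph of an STS is dense and highly symmetric, such a search terminates almost immediately, and I would present the resulting cycle as an ordered list of blocks (or equivalently as the corresponding $2$-shift universal sequence).

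The main obstacle is not conceptual but presentational and verificational: the alternating condition couples the \emph{choice} of intersection point on each edge (an STS edge in the block intersection graph may carry several parallel colored edges, one per common point, though for an STS any two blocks meet in at most one point, so in fact each adjacency carries a single forced color) with the global cycle structure, so one cannot freely recolor. For an STS the simplification is that two distinct blocks share at most one point, hence each edge of the block intersection graph has a uniquely determined color, and the ``alternating'' requirement becomes the condition that no point serves as the shared point of two consecutive block-adjacencies in the cycle. The real care is therefore in certifying, for each listed cycle, that (a) consecutive blocks genuinely intersect, (b) the forced colors alternate, and (c) all colors are realized; this is routine but error-prone, so I would tabulate each cycle explicitly and leave verification to direct inspection.
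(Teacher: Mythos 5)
Your approach matches the paper's exactly: the case $n=3$ is dismissed as trivial, and for $n\in\{7,9,13,15\}$ the paper simply exhibits explicit c.a.h.\ cycles (found by search) in an appendix, which is precisely the explicit-construction-plus-routine-verification strategy you describe. Your observations that each block adjacency in an STS carries a unique forced color and that the displayed $2$-shift universal cycle for the STS$(7)$ already encodes such a cycle are both correct and consistent with the paper.
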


\begin{proof}
The proposition is trivially true for $n=3$.
The required c.a.h. {\rm STS}$(n)$ for $n\in\{7,9,13,15\}$ are given
in Appendix \ref{smallsts}.
\end{proof}

\begin{proposition}
\label{mincover}
\ There exists an alternating hamiltonian minimum $(n,3,2)$-covering for
$n\in\{4,5,6,8,10,11,12,14,16,20\}$.
\end{proposition}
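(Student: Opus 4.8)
The plan is to dispose of these ten orders by explicit construction, since they are precisely the seed values that the recursive machinery of Propositions \ref{fillin}, \ref{adjoin}, and \ref{break} cannot itself produce (they are exactly the small orders with $n\not\equiv 1,3\pmod 6$, complementing the Steiner triple system orders of Proposition \ref{sts}). For each $n\in\{4,5,6,8,10,11,12,14,16,20\}$ I would exhibit, in an appendix, a concrete minimum $(n,3,2)$-covering whose size equals the Fort--Hedlund value $C(n,3,2)=\lceil\frac n3\lceil\frac{n-1}2\rceil\rceil$, together with a cyclic ordering $A_0,A_1,\dots,A_{m-1}$ of its blocks ($m=C(n,3,2)$) that witnesses an alternating hamiltonian cycle: consecutive blocks $A_i,A_{i+1}$ (indices modulo $m$) meet in at least one point, and each cycle edge $\{A_i,A_{i+1}\}$ can be assigned a color in $A_i\cap A_{i+1}$ so that adjacent edges receive distinct colors.

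Producing a minimum covering is the routine part: for these small non-admissible orders such coverings are classical and can be written down directly (for instance by completing a near-optimal triple packing with a short tail of blocks covering the remaining uncovered pairs). The substance of the argument is the alternating ordering, which I would verify by a finite search. For the smallest orders this is a short hand computation; for example, when $n=4$ the three blocks $\{1,2,3\},\{1,2,4\},\{1,3,4\}$ form a single triangle in the block intersection graph, and coloring its edges $2$, $4$, $3$ gives an alternating hamiltonian cycle. For the larger orders (up to $m=C(20,3,2)=67$ blocks) a straightforward backtracking search suffices, and the resulting cycles would simply be tabulated.

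The main obstacle is the alternating constraint rather than hamiltonicity. Block intersection graphs of coverings are dense and readily hamiltonian, but each block, being a triple, offers only three candidate colors, and in the cycle a block $A_i$ carries two incident edges whose colors must be distinct points chosen from $A_{i-1}\cap A_i$ and $A_i\cap A_{i+1}$; forcing these local choices to be globally consistent around the entire cycle is a constraint-satisfaction problem that need not admit a solution for an arbitrary minimum covering. Thus part of the work is to select the covering so that a valid alternating ordering exists, not merely any minimum covering of the correct size.

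Finally, I would note why the statement asks only for an \emph{alternating} hamiltonian covering rather than a colorful one as in Proposition \ref{sts}. An alternating hamiltonian cycle has exactly $m=C(n,3,2)$ edges, so a colorful cycle, which must realize all $n$ point-colors, can exist only when $m\ge n$. This already fails for $n=4$ (where $m=3$) and $n=5$ (where $m=4$), so colorfulness is genuinely unattainable at the smallest orders, and requiring only the alternating property is the correct weakening for this proposition.
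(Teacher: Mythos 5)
Your proposal matches the paper's approach exactly: the paper's proof consists of a one-line pointer to an appendix listing, for each of the ten orders, an explicit minimum covering together with a block ordering realizing an alternating hamiltonian cycle, found by search. Your side remarks are also consistent with the paper — the appendix indeed notes that the constructions for $n\in\{6,8,10,11,12,14,16,20\}$ are in fact c.a.h., while for $n=4,5$ one has $C(n,3,2)<n$ so colorfulness is impossible, just as you observe — so the only thing separating your write-up from the paper's is actually tabulating the nine remaining cycles.
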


\begin{proof}
The required alternating hamiltonian minimum $(n,3,2)$-coverings are given
in Appendix \ref{smallcovering}.
\end{proof}

\begin{proposition}
\label{miscsmallgdd}
There exists an alternating hamiltonian $\{3\}$-{\rm GDD} of type $2^3$
and a c.a.h. $\{3\}$-{\rm GDD} of the following types: \\
\begin{tabular}{ccccccc}
{\rm (i)} $2^4$ & {\rm (ii)} $1^6 3^1$ & {\rm (iii)} $3^3$ & {\rm (iv)} $2^3 4^1$ &
{\rm (v)} $2^6$ & {\rm (vi)} $1^{12}3^1$ & {\rm (vii)} $5^3$ \\
\end{tabular}
\end{proposition}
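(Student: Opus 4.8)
The plan is to dispose of each type by exhibiting a concrete $\{3\}$-{\rm GDD} of that type together with an explicit cyclic ordering of its blocks that meets the stated conditions. The existence of the designs themselves is immediate from Theorem \ref{CHR}: every listed type has the form $g^tu^1$, namely the pure cases $2^3=2^30^1$, $2^4$, $2^6$, $3^3$, $5^3$ together with $1^63^1$, $2^34^1$, and $1^{12}3^1$, and one checks directly that conditions (i)--(v) of Theorem \ref{CHR} hold in each instance. Thus the whole content lies in the alternating and colorful requirements, and it is convenient first to record what these mean here. In a $\{3\}$-{\rm GDD} any two distinct blocks meet in at most one point, so the block intersection graph is \emph{simple}: its edges are exactly the pairs of blocks sharing a point, each edge colored by that point, and the color set is precisely the set of points lying in two or more blocks. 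A hamiltonian ordering $A_0,A_1,\ldots,A_{m-1},A_0$ of the blocks is then alternating exactly when, for every $i$, the point $A_i\cap A_{i-1}$ differs from the point $A_i\cap A_{i+1}$, and it is colorful exactly when every point of replication at least two occurs as a consecutive intersection $A_i\cap A_{i+1}$ somewhere along the cycle.

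The type $2^3$ is genuinely exceptional, which is why it is stated separately, and its treatment is short and complete. Such a design has $6$ points and $4$ blocks, each point lies in exactly $2$ blocks, and so there are $6$ colors and every one of the $\binom{4}{2}$ pairs of blocks meets in a distinct point; hence the edge-colored block intersection graph is a copy of $K_4$ in which all six edges receive distinct colors. Every hamiltonian cycle is then a $4$-cycle, automatically alternating (all edge colors distinct) but using only $4$ of the $6$ colors, so it can never be colorful; the best possible is an alternating hamiltonian ordering, and any $4$-cycle furnishes one. For the seven remaining types the necessary counting inequality holds, namely that the number of blocks is at least the number of colors, with equality for $2^4$ and $3^3$ (each having $8$, respectively $9$, blocks and the same number of colors), so that there a colorful hamiltonian cycle must in fact be rainbow, using each color exactly once.

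For types (i)--(vii) I would fix one explicit block set for each---taking advantage of symmetry where convenient, for instance a cyclic point set $\bbZ_n$ that keeps the block intersection graph easy to describe---and then exhibit an ordering of the blocks satisfying both conditions, tabulating the designs and their cycles in an appendix in the style of Propositions \ref{sts} and \ref{mincover}. For the smaller types ($2^4$, $3^3$, $1^63^1$, $2^34^1$, with $8$ to $12$ blocks) the ordering is readily found by hand, while for the larger ones ($2^6$ with $20$ blocks, $5^3$ with $25$, and $1^{12}3^1$ with $34$) a short computer search is the practical route. Maintaining alternation is the easy part, since each block carries three points and need only feed two distinct ones to its two cycle-neighbors. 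The real obstacle is the colorful condition, which demands that a single hamiltonian cycle pass through \emph{every} point of replication at least two; this is most delicate where the slack between blocks and colors is smallest---the tight cases $2^4$ and $3^3$, where the cycle must be rainbow---and where the point set is large and densely intersecting, as in $5^3$ and $1^{12}3^1$, and it is there that the (possibly computer-assisted) search does the real work.
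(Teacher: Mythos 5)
Your approach is the same as the paper's: the proposition is proved by directly exhibiting, for each type, a concrete $\{3\}$-GDD together with an explicit cyclic ordering of its blocks (the paper lists these in Appendix~\ref{3GDDa}, giving the group structure in the proof body), and your framing of what ``alternating'' and ``colorful'' mean for a simple, point-colored block intersection graph is accurate. Your treatment of type $2^3$ is complete and correct as written, and in fact it is sharper than the paper's, since it explains \emph{why} that case is stated only as alternating hamiltonian: the block intersection graph is $K_4$ with six distinctly colored edges, so every hamiltonian cycle is automatically alternating but necessarily misses two colors. Your counting checks (existence via Theorem~\ref{CHR}, blocks versus colors, and the rainbow forcing for $2^4$ and $3^3$) are also correct. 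The one thing to flag is that for types (i)--(vii) your argument stops at a plan: the entire content of the proposition for those types is the explicit block sets and orderings themselves, and until those are actually written down (as the paper does in its appendix, and as you propose to do by hand or by computer search) the existence claim is not yet established. So this is the right strategy, correctly set up, but incomplete without the tabulated witnesses.
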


\begin{proof}
For the alternating hamiltonian $\{3\}$-GDD $(X,\G,\A)$
of type $2^3$, take $X=[6]$, the groups to be $\{i,i+3\}$, $i\in[3]$, and
the blocks to be $\{2,1,3\}$, $\{3,4,5\}$, $\{5,1,6\}$, and $\{6,4,2\}$.

The other required c.a.h. $\{3\}$-GDDs are given in Appendix \ref{3GDDa}, noting the following.

For the c.a.h. $\{3\}$-GDD of type $2^4$, the groups are $\{i,i+4\}$, $i\in[4]$.

For the c.a.h. $\{3\}$-GDD of type $1^63^1$, the group of size three is $\{4,5,9\}$.

For the c.a.h. $\{3\}$-GDD of type $3^3$, the groups are $\{i,i+3,i+6\}$, $i\in[3]$.

For the c.a.h. $\{3\}$-GDD of type $2^34^1$, the groups are $\{1,5\}, \{2,6\}, \{3,7\}, \{4,8$, $9,10\}$.

For the c.a.h. $\{3\}$-GDD of type $2^6$, the groups are $\{i,i+6\}$, $i\in[6]$.

For the c.a.h. $\{3\}$-GDD of type $1^{12}3^1$, the group of size three is $\{3,8,14\}$.

For the c.a.h. $\{3\}$-GDD of type $5^3$, the groups are $\{i,i+3,i+6,i+9,i+12\}$, $i\in[3]$. 
\end{proof}

\begin{proposition}
\label{6^t}
There exists a c.a.h. $\{3\}$-{\rm GDD} of type $6^t$, for $t\in\{3$, $4$, $5$, $6$, $8\}$.
\end{proposition}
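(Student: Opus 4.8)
The plan is to build every required type $6^t$ by a single application of the Fundamental Construction (Theorem \ref{WFC}), inflating a small master GDD whose points all receive a constant weight and filling each block with one of the c.a.h. $\{3\}$-GDDs already produced in Proposition \ref{miscsmallgdd}. The observation that makes this flexible is that Theorem \ref{WFC} requires only the \emph{ingredient} GDDs to be c.a.h.; the master may be any GDD whatsoever. Hence I am free to invoke the full existence results of Theorem \ref{CHR} (for $\{3\}$-GDDs of type $g^tu^1$) and Theorem \ref{BSH} (for $\{4\}$-GDDs of type $g^t$) to supply masters, and I only need c.a.h. ingredients of the three types $2^3$, $2^4$, and $3^3$, all of which are in hand.

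Two inflation schemes produce a group of size $6$. First, giving weight $3$ to every point of a master with groups of size $2$ turns each triple into a copy of the c.a.h. $\{3\}$-GDD of type $3^3$ and each group into one of size $6$. Second, giving weight $2$ to every point of a master with groups of size $3$ turns each block into a copy of the c.a.h. $\{3\}$-GDD of type $2^3$ (when the master is a $\{3\}$-GDD) or of type $2^4$ (when the master is a $\{4\}$-GDD), again producing groups of size $6$. Accordingly I would split into cases according to which master is available. For $t\in\{3,4,6\}$, a $\{3\}$-GDD of type $2^t$ exists by Theorem \ref{CHR} (these are precisely the admissible values $t\equiv 0,1\pmod 3$), and weight $3$ together with the ingredient $3^3$ yields $6^t$. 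For $t=5$ the residue forbids type $2^5$, so instead I would take the $\{3\}$-GDD of type $3^5$, which exists by Theorem \ref{CHR} since $t$ is odd, and inflate by weight $2$ with the ingredient $2^3$.

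The remaining value $t=8$ is the one that fits neither $\{3\}$-GDD master: type $3^8$ violates the parity condition (iii) of Theorem \ref{CHR} since $g(t-1)=21$ is odd, while type $2^8$ violates the divisibility condition (v) since $4\binom{8}{2}=112\equiv 1\pmod 3$. Here I would step up to a $\{4\}$-GDD master of type $3^8$, which exists by Theorem \ref{BSH}(iii) because $3\equiv 3\pmod 6$ and $8\equiv 0\pmod 4$ (and $3^8$ is not one of the two excluded types), and inflate by weight $2$ using the larger ingredient $2^4$. In each case the output type is $6^t$, and its c.a.h. property follows immediately from Theorem \ref{WFC}.

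The main obstacle is precisely this arithmetic bookkeeping: no single master-plus-weight recipe survives the constraints of Theorems \ref{CHR} and \ref{BSH} simultaneously for all five values of $t$, so the argument is unavoidably a short case analysis. The key idea is to realize that when every $\{3\}$-GDD master of the right group size is blocked by a congruence obstruction (as happens at $t=8$), one can pass to a $\{4\}$-GDD master and pay for it with the larger admissible ingredient $2^4$ rather than $2^3$. Once the masters are located, no cycle-combining is needed by hand, since Theorem \ref{WFC} discharges the colorful-alternating-hamiltonian conclusion automatically.
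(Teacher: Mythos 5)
Your cases $t\in\{3,4,6\}$ and $t=8$ coincide with the paper's proof, but your treatment of $t=5$ has a genuine gap. You propose to inflate a $\{3\}$-GDD of type $3^5$ by weight $2$, which requires as ingredient a \emph{c.a.h.}\ $\{3\}$-GDD of type $2^3$ for every block of the master; Theorem \ref{WFC} demands colorful alternating hamiltonian ingredients, not merely alternating hamiltonian ones. Proposition \ref{miscsmallgdd} supplies only an \emph{alternating hamiltonian} $\{3\}$-GDD of type $2^3$, and in fact a c.a.h.\ one cannot exist: such a GDD has exactly $4$ blocks, so its hamiltonian cycle has $4$ edges, which cannot realize all $6$ point-colors. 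Without colorful ingredients the merging step in the proof of Theorem \ref{WFC} (via Proposition \ref{C+C}) is not guaranteed, so your claim that ``no cycle-combining is needed by hand'' fails precisely at $t=5$. (Where the paper does use the non-colorful $2^3$ ingredient, as in Proposition \ref{6^tu^1}, it has to track a designated set of blocks of the master by hand to certify the c.a.h.\ property.)

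The repair is exactly the device you already deploy at $t=8$: a $\{4\}$-GDD of type $3^5$ exists by Theorem \ref{BSH}(iii) since $3\equiv 3\pmod 6$ and $5\equiv 1\pmod 4$, so weight $2$ with the c.a.h.\ ingredient of type $2^4$ handles $t=5$ as well. This is what the paper does, treating $t\in\{5,8\}$ together; with that substitution your argument matches the paper's.
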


\begin{proof}
For $t\in\{3,4,6\}$, take a $\{3\}$-GDD of type $2^t$, which exists by Theorem \ref{CHR},
as master GDD and apply Theorem \ref{WFC} with weight function $\omega(\cdot)=3$.
The required ingredient c.a.h. $\{3\}$-GDD of type $3^3$ exists by 
Proposition \ref{miscsmallgdd}.

For $t\in\{5,8\}$, take a $\{4\}$-GDD of type $3^t$, which exists by Theorem \ref{BSH},
as master GDD and apply Theorem \ref{WFC} with weight function $\omega(\cdot)=2$.
The required ingredient c.a.h. $\{3\}$-GDD of type $2^4$ exists by
Proposition \ref{miscsmallgdd}.
\end{proof}

\begin{proposition}
\label{6^tu^1}
There exists a c.a.h. $\{3\}$-{\rm GDD} of type $6^tu^1$, for $n\in\{3$, 
$4$, $5$, $6$, $7$, $8$, $9$, $10$, $11$, $13$, $14$, $17$, $18$, $22\}$
and $u\in\{4,8\}$.
\end{proposition}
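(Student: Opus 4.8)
The plan is to construct c.a.h. $\{3\}$-GDDs of type $6^t u^1$ using the recursive machinery already in place, principally the Fundamental Construction (Theorem \ref{WFC}) together with the small ingredient GDDs of Propositions \ref{miscsmallgdd} and \ref{6^t}. The governing idea is to locate, for each value of $t$ in the prescribed list, a master $\{3\}$-GDD whose points can be given integer weights so that the weighted type collapses to $6^t u^1$, with every block of the master receiving an ingredient c.a.h. GDD that is known to exist. Since $u\in\{4,8\}$ and the main group sizes are $6$, a natural choice of weights is $\omega(\cdot)=2$, so that the candidate master GDDs are $\{3\}$-GDDs (or $\{4\}$-GDDs, if instead I weight by a mix) whose groups have size $3t$ plus a distinguished group producing the $u^1$ part.

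First I would fix the weight $\omega=2$ throughout and seek master $\{3\}$-GDDs of type $3^t w^1$ with $2w=u$, i.e. $w\in\{2,4\}$; applying Theorem \ref{WFC} then yields type $6^t u^1$ provided every ingredient c.a.h. $\{3\}$-GDD needed — one of type $2^3$ (for blocks meeting only size-$6$ groups) and one of type $2^3 4^1$ or $2^6$-type variants (for blocks meeting the distinguished group) — is supplied by Proposition \ref{miscsmallgdd}. The existence of the integer master GDDs of type $3^t w^1$ is exactly the domain of Theorem \ref{CHR} (with $g=3$), so for each listed $t$ I would verify that conditions (i)--(v) of Theorem \ref{CHR} hold for the relevant $(g,t,u)=(3,t,w)$. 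For the cases where $u=8$ (hence $w=4$) one checks the divisibility and range constraints, and for $u=4$ (hence $w=2$) likewise; a few of the larger $t$ may instead be more cleanly obtained by the recursive Propositions \ref{adjoin} or \ref{break}, adjoining the $u$ points or breaking groups of size $6$ down from an already-constructed c.a.h. $6^{t'}$ design via Proposition \ref{6^t}.

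For the sporadic or boundary values of $t$ (for instance small $t$ such as $t=3$, or values where Theorem \ref{CHR} forbids the tidy master) I would fall back on Proposition \ref{break}: start from a c.a.h. $\{3\}$-GDD of type $6^{t'}$ given by Proposition \ref{6^t}, adjoin a group of $u$ points, and break up each size-$6$ group using a c.a.h. $\{3\}$-GDD of type $h^{6/h} u^1$ with $h=6$ (trivial) or by assembling the whole thing directly from $6^t 4^1$ or $6^t 8^1$ ingredients. In each instance the colorful-alternating-hamiltonian property is inherited automatically: Theorem \ref{WFC} guarantees that the cycles in the ingredient GDDs merge into a single c.a.h. cycle precisely because every pair of points lies in a common block and hence a common cycle, and Propositions \ref{fillin}--\ref{break} carry the same merging argument.

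The main obstacle I anticipate is bookkeeping rather than conceptual: for each of the fourteen values of $t$ and each of the two values of $u$, I must confirm that a suitable master GDD exists (checking the arithmetic of Theorem \ref{CHR} or Theorem \ref{BSH}) \emph{and} that every ingredient c.a.h. GDD that the chosen weighting calls for has already been secured in Proposition \ref{miscsmallgdd} or \ref{6^t}. The delicate point is ensuring the ingredient list is \emph{complete} — in particular that blocks meeting the distinguished $u^1$ group always receive an ingredient of the correct type (type $2^3 4^1$ when $u=8$ with three weight-$2$ points meeting a weight-$4$ point, etc.), and that no master block demands a c.a.h. GDD outside the stock already built. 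Verifying this closure across all listed parameters, and peeling off the handful of exceptional $t$ that require the direct recursive treatment of Propositions \ref{adjoin}--\ref{break}, is where the real work lies.
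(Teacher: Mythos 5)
Your primary plan---weight $\omega(\cdot)=2$ applied to a master $\{3\}$-GDD of type $3^t w^1$ with $2w=u$---fails for two independent reasons. First, the required masters do not exist: by Theorem \ref{CHR} with $g=3$, condition (iii) forces $3(t-1)+w\equiv 0\pmod 2$ (i.e.\ $t$ odd when $w\in\{2,4\}$) while condition (iv) forces $3t\equiv 0\pmod 2$ (i.e.\ $t$ even), so there is no $\{3\}$-GDD of type $3^t2^1$ or $3^t4^1$ for any $t$. Second, even granting a master, every block of a $\{3\}$-GDD meets three groups in one point each, so under constant weight $2$ \emph{every} ingredient has type $2^3$; and a $\{3\}$-GDD of type $2^3$ has $6$ points but only $4$ blocks, so a hamiltonian cycle in its block intersection graph has only $4$ edges and cannot be colorful. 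This is precisely why Proposition \ref{miscsmallgdd} lists type $2^3$ as merely \emph{alternating hamiltonian} rather than c.a.h. Theorem \ref{WFC} requires all ingredients to be c.a.h.---its merging step needs every point of each ingredient to occur as an edge color of that ingredient's cycle---so the inheritance of the c.a.h.\ property, which you describe as ``automatic,'' breaks down exactly here. Your fallback via Proposition \ref{break} with $h=6$ is circular, since its ingredient is a c.a.h.\ $\{3\}$-GDD of type $6^{g_i/6}u^1$, the very object being constructed.

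The paper's actual route confronts both obstacles head-on. For $u=8$ it takes a $\{4\}$-GDD of type $3^{t+1}$ as master and assigns weight $4$ to one point and $2$ to the rest, so the ingredients are of types $2^4$ and $2^34^1$, both genuinely c.a.h., and Theorem \ref{WFC} applies cleanly. For $u=4$, and for $t\in\{5,6,10\}$ with either $u$, no such clean weighting exists: the paper assigns weight \emph{zero} to selected points of explicit $\{4\}$- or $\{4,7\}$-GDD masters (given in the appendices), which inevitably produces some non-colorful $2^3$ ingredients; the c.a.h.\ property of the output must then be verified directly, via a designated (bolded) set of master blocks whose ingredient cycles are stitched together by hand. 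The remaining values $t\in\{9,13,14,17,18,22\}$ are only then reached by Proposition \ref{break}, bootstrapping from these base cases. The ``bookkeeping'' you defer thus conceals the entire substance of the proof: a nonexistence obstruction for your proposed masters, and a failure of the c.a.h.-inheritance mechanism that forces an ad hoc argument in roughly half the cases.
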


\begin{proof}
For $t\in\{5,6,10\}$ and $u\in\{4,8\}$, take the $\{4,7\}$-GDD of type $3^t u^1$ in
Appendix \ref{47GDD} as master GDD, and 
apply Wilson's Fundamental Construction (Fig. \ref{Wilson})
with weight function $\omega$ that assigns weight zero to the $u/2$ points
$3t+u/2+1,3t+u/2+2,\ldots,3t+u$ in the group
of size $u$, and weight two to each of the remaining points. Use as ingredient GDDs,
an alternating hamiltonian $\{3\}$-GDD of type $2^3$ and
c.a.h. $\{3\}$-GDDs of types $2^4$ and $2^6$, all of which exist by 
Proposition \ref{miscsmallgdd}. The result is a $\{3\}$-GDD $\D^*$ of type
$6^t u^1$. 
Let $\A'$ be the set of blocks, indicated in bold (and also in italics when $u=8$), 
of the master GDD. Then $\cup_{A\in\A'}\B_A$
(with the notations in Fig. \ref{Wilson}), together with the blocks of $\D^*$,
is a c.a.h. cycle in $\D^*$.

For $t\in\{3,4,7,8,11\}$ and $u=4$, take the $\{4\}$-GDD of type $3^{t+1}$ in
Appendix \ref{4GDD3^t} as master GDD, and apply Wilson's Fundamental Construction
(Fig. \ref{Wilson})
with weight function $\omega$ that assigns weight zero to the point $3(t+1)$, and weight
two to each of the remaining points. Use as ingredient GDDs, 
an alternating hamiltonian $\{3\}$-GDD of type $2^3$ and
a c.a.h. $\{3\}$-GDD of type $2^4$, both of which exist by 
Proposition \ref{miscsmallgdd}. The result is a $\{3\}$-GDD $\D^*$ of type $6^t 4^1$.
Let $\A'$ be the set of blocks, indicated in bold, of the master GDD. Then $\cup_{A\in\A'}\B_A$
(with the notations in Fig. 1), together with the blocks of $\D^*$,
is a c.a.h. cycle in $\D^*$.

For $t\in\{3,4,7,8,11\}$ and $u=8$, take a $\{4\}$-GDD of type $3^{t+1}$, which exists
by Theorem \ref{BSH}, as master GDD and apply Theorem \ref{WFC} with weight
function that assigns weight four to one point, and weight two to each of the remaining points.
The required ingredient c.a.h. $\{3\}$-GDDs of types $2^4$ and $2^3 4^1$ exist by
Proposition \ref{miscsmallgdd}. 

For the remaining $t$ and $u$, we break into three cases.

\begin{description}
\item[$t\in\{9,18\}$] Take a $\{3\}$-GDD of type $(t/3)^3$,
which exists by Theorem \ref{BSH}, as master GDD and apply Theorem \ref{WFC}
with weight function $\omega(\cdot)=6$ to obtain a c.a.h. $\{3\}$-GDD $\fS$ of type $(2t)^3$.
The required ingredient c.a.h. $\{3\}$-GDD of type $6^3$ exists by Proposition \ref{6^t}.
Now adjoin $u$ points and break up the groups of $\fS$ (Proposition \ref{break}) using a c.a.h.
$\{3\}$-GDD of type $6^{t/3} u^1$, whose existence has been established above,
to obtain a c.a.h. $\{3\}$-GDD of type $6^t u^1$.

\item[$t\in\{13,14\}$] Take a $\{3\}$-GDD of type $6^3 (2(t-9))^1$, which exists by Theorem
\ref{BSH}, as master GDD and apply Theorem \ref{WFC} with weight function $\omega(\cdot)=3$,
to obtain a c.a.h. $\{3\}$-GDD $\fS$ of type $18^3 (6(t-9))^1$. The required ingredient c.a.h.
$\{3\}$-GDD of type $3^3$ exists by Proposition \ref{miscsmallgdd}.
Now adjoin $u$ points and break up the groups of $\fS$ (Proposition \ref{break})
using c.a.h. $\{3\}$-GDDs of type $6^s u^1$, $s\in\{3,t-9\}$, whose existence has
been established above, to obtain a c.a.h. $\{3\}$-GDD of type $6^t u^1$.

\item[$t\in\{17,22\}$]  
Take a $\{3\}$-GDD of type $((t-2)/5)^4 ((t+8)/5)^1$, which exists by Theorem
\ref{BSH}, as master GDD and apply Theorem \ref{WFC} with weight function $\omega(\cdot)=6$,
to obtain a c.a.h. $\{3\}$-GDD $\fS$ of type $(6(t-2)/5)^4 (6(t+8)/5)^1$. The required ingredient c.a.h.
$\{3\}$-GDD of type $6^3$ exists by Proposition \ref{miscsmallgdd}.
Now adjoin $u$ points and break up the groups of $\fS$ (Proposition \ref{break})
using c.a.h. $\{3\}$-GDDs of type $6^s u^1$, $s\in\{(t-2)/5,(t+8)/5\}$, whose existence has
been established above, to obtain a c.a.h. $\{3\}$-GDD of type $6^t u^1$.
\end{description}
\end{proof}

\subsection{Piecing Things Together} \hfill

\begin{proposition}
\label{all6^t}
There exists a c.a.h. $\{3\}$-{\rm GDD} of type $6^t$, for all $t\geq 3$.
\end{proposition}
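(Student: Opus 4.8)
The plan is to realize every type $6^t$, $t\geq 3$, by a single application of the Fundamental Construction (Theorem~\ref{WFC}) to a suitable pairwise balanced design, using the c.a.h. $\{3\}$-GDDs of type $6^k$ already secured in Proposition~\ref{6^t} as ingredients. The key observation is that Proposition~\ref{6^t} supplies exactly the block sizes $k\in\{3,4,5,6,8\}$, and that these are precisely the block sizes guaranteed by the universal PBD existence result of Theorem~\ref{GMP}.

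Concretely, I would proceed as follows. First, fix $t\geq 3$ and invoke Theorem~\ref{GMP} to obtain a PBD$(t,\{3,4,5,6,8\})$, which I regard as a $\{3,4,5,6,8\}$-GDD of type $1^t$ (each group a single point). Take this as the master GDD $\D$ and apply the Fundamental Construction with the constant weight function $\omega(\cdot)=6$. For a block $A$ of the master, $A$ has some size $k\in\{3,4,5,6,8\}$, so the required ingredient is a c.a.h. $\{3\}$-GDD of type $[\omega(a):a\in A]=6^k$; each of these exists by Proposition~\ref{6^t}. Since each group of $\D$ consists of a single point of weight $6$, Theorem~\ref{WFC} outputs a c.a.h. $\{3\}$-GDD of type $6^t$, as desired.

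The substantive work has already been front-loaded into the ingredient lemmas, so the remaining step is essentially a bookkeeping check that the hypotheses of Theorem~\ref{WFC} are met. The one point that must line up is the agreement between the admissible block sizes of the master design and the orders for which ingredient c.a.h. $\{3\}$-GDDs of type $6^k$ are available; here it is no accident that Proposition~\ref{6^t} was established for exactly $k\in\{3,4,5,6,8\}$, matching Theorem~\ref{GMP}. Thus I do not anticipate a genuine obstacle: once the two existence results are aligned, the colorful-alternating-hamiltonian property of the output follows automatically from Theorem~\ref{WFC}, and the argument closes uniformly over all $t\geq 3$.
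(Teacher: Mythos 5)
Your proposal is correct and is essentially identical to the paper's own proof: both take a PBD$(t,\{3,4,5,6,8\})$ from Theorem~\ref{GMP} as the master GDD, apply Theorem~\ref{WFC} with constant weight $6$, and use the c.a.h. $\{3\}$-GDDs of type $6^s$, $s\in\{3,4,5,6,8\}$, from Proposition~\ref{6^t} as ingredients. No issues.
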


\begin{proof}
Take a PBD$(t,\{3,4,5,6,8\})$, which exists by Theorem \ref{GMP}, as master GDD, and
apply Theorem \ref{WFC} with weight function $\omega(\cdot)=6$. The required
ingredient c.a.h. $\{3\}$-GDDs of type $6^s$, $s\in\{3,4,5,6,8\}$, exist by Proposition \ref{6^t}.
\end{proof}

\begin{proposition}
\label{all6^tu^1}
There exists a c.a.h. $\{3\}$-{\rm GDD} of type $6^t u^1$, for all $t\geq 3$ and $u\in\{4,8\}$. 
\end{proposition}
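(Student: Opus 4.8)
The plan is to reduce every $t\ge 3$ to the base cases already supplied by Proposition~\ref{6^tu^1} through a single recursive idea, and the first thing to get right is the main conceptual obstacle. One cannot simply adjoin the $u$ points to a c.a.h. $\{3\}$-GDD of type $6^t$ and break its groups via Proposition~\ref{break}, because breaking a group of size $6$ with $h=6$ would require an ingredient $\{3\}$-GDD of type $6^1u^1$, and taking two groups together would require type $6^2u^1$; neither exists for $u\in\{4,8\}$ by Theorem~\ref{CHR}. The remedy is to first build a master c.a.h. $\{3\}$-GDD all of whose group sizes are multiples of $6$ that are at least $18$, and only then apply Proposition~\ref{break} with $h=6$: a group of size $6k$ is broken with the base-case ingredient $6^ku^1$, valid exactly when $k$ is one of the exponents handled in Proposition~\ref{6^tu^1}, while the common adjoined set $Y$ of size $u$ supplies the tail.

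Concretely, I would split on $t\bmod 3$. For $t\equiv 0\pmod 3$ with $t\ge 9$, apply Theorem~\ref{WFC} with weight $3$ to an ordinary $\{3\}$-GDD of type $6^{t/3}$ (which exists by Theorem~\ref{CHR}), using the c.a.h. ingredient $\{3\}$-GDD of type $3^3$ from Proposition~\ref{miscsmallgdd}; this produces a c.a.h. $\{3\}$-GDD of type $18^{t/3}$. Breaking each group of size $18$ into $6^3u^1$ and adjoining $u$ points (Proposition~\ref{break}) yields $6^tu^1$. For $t\equiv 1\pmod 3$ with $t\ge 13$, the same weight-$3$ inflation applied to a $\{3\}$-GDD of type $6^{(t-4)/3}8^1$ gives type $18^{(t-4)/3}24^1$, and breaking the groups of sizes $18$ and $24$ with the base ingredients $6^3u^1$ and $6^4u^1$ yields $6^tu^1$. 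For $t\equiv 2\pmod 3$ with $t\ge 14$, inflating a $\{3\}$-GDD of type $6^{(t-5)/3}10^1$ gives type $18^{(t-5)/3}30^1$, and breaking with $6^3u^1$ and $6^5u^1$ yields $6^tu^1$. In each family $\sum_i g_i=6t$, so Proposition~\ref{break} returns precisely $6^tu^1$, and both $u=4$ and $u=8$ are handled uniformly since all the breaking ingredients $6^ku^1$ for $k\in\{3,4,5\}$ are base cases. The only values of $t$ falling below the three thresholds are $3,4,5,6,7,8,10,11$, and each of these is a base case in Proposition~\ref{6^tu^1}.

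I expect the substance to be bookkeeping rather than a single hard step: one must check, via the congruence conditions of Theorem~\ref{CHR}, that the ordinary master GDDs of types $6^m$, $6^m8^1$, and $6^m10^1$ exist for all the required $m\ge 3$, and confirm that every $t\ge 3$ is caught by exactly one residue family or lies in the finite base list. The genuine conceptual hurdle, already resolved above, is recognizing that the groups must be inflated to size at least $18$ before they can be broken; once that is seen, the remaining design work is to choose the tail group sizes $24=6\cdot 4$ and $30=6\cdot 5$ so that the inflated factors land on the admissible base exponents $k\in\{3,4,5\}$, which is exactly what makes the three residue classes close up against the thresholds $9$, $13$, and $14$.
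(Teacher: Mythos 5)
Your proof is correct, but it takes a genuinely different route from the paper's. The paper handles $t\in\{3,4,5,6,7,8,9,10,11,13,14,17,18,22\}$ directly via Proposition~\ref{6^tu^1}, and for all other $t$ takes a PBD$(t+1,\{4,5,6,7\})$ from Theorem~\ref{Lenz} as master, applies Theorem~\ref{WFC} with weight $6$ on $t$ points and weight $u$ on the last point, and fills in with ingredient c.a.h.\ $\{3\}$-GDDs of types $6^s$ and $6^{s-1}u^1$, $s\in\{4,5,6,7\}$; the exceptional orders of Lenz's theorem are exactly what forces the long sporadic list. You instead split on $t\bmod 3$, inflate ordinary $\{3\}$-GDDs of types $6^m$, $6^m8^1$, $6^m10^1$ (whose existence for $m\geq 3$ does follow from Theorem~\ref{CHR}) by weight $3$ using the c.a.h.\ type-$3^3$ ingredient to reach masters with group sizes in $\{18,24,30\}$, and then break groups via Proposition~\ref{break} with ingredients $6^ku^1$, $k\in\{3,4,5\}$. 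I checked the arithmetic: the thresholds $9,13,14$ are right, the residual small cases $\{3,4,5,6,7,8,10,11\}$ are all covered by Proposition~\ref{6^tu^1}, the Theorem~\ref{CHR} congruences hold for all three master families, and $\sum_i g_i=6t$ in each case, so Proposition~\ref{break} returns exactly type $6^tu^1$. Your opening observation --- that one cannot break the size-$6$ groups of a $6^t$ GDD directly because types $6^1u^1$ and $6^2u^1$ fail Theorem~\ref{CHR} for $u\in\{4,8\}$, so the groups must first be inflated to size at least $18$ --- is the correct identification of the obstacle, and is in fact the same mechanism the paper deploys locally for the sporadic cases $t\in\{9,13,14,17,18,22\}$ inside Proposition~\ref{6^tu^1}; you have promoted it to the general argument. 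What your route buys is independence from Theorem~\ref{Lenz} and Proposition~\ref{all6^t} and a shorter list of seeds; what the paper's route buys is a single uniform master (one PBD) rather than three residue classes, at the cost of more sporadic small cases.
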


\begin{proof}
For $t\in\{3,4,5,6,7,8,9,10,11,13,14,17,18,22\}$ and $u\in\{4,8\}$, the required GDDs exist
by Proposition \ref{6^tu^1}. For other values of $t$, take a
PBD$(t+1,\{4,5,6,7\})$, which exists by Theorem \ref{Lenz}, as master GDD, and
apply Theorem \ref{WFC} with weight function $\omega$ that assigns weight six to each of
$t$ points and weight $u$ to the remaining point. The required ingredient c.a.h. $\{3\}$-GDDs of
types $6^s$ and $6^{s-1} u^1$, $s\in\{4,5,6,7\}$, exist by either
Proposition \ref{all6^t} or Proposition \ref{6^tu^1}.
\end{proof}

We are now ready to state the main result of this section.

\begin{theorem}
\  \ There exists an alternating hamiltonian minimum $(n,3,2)$-covering, for all $n\geq 3$.
\end{theorem}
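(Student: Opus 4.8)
The plan is to split on the residue of $n$ modulo $6$ and, in each class, exhibit a covering whose number of blocks equals the Fort--Hedlund value $C(n,3,2)=\lceil (n/3)\lceil (n-1)/2\rceil\rceil$. Since any covering attaining this number is minimum by definition, and since the recursive constructions of Propositions \ref{fillin} and \ref{adjoin} already return a c.a.h.\ covering whenever their ingredients are c.a.h., the alternating-hamiltonicity comes for free and the real content is twofold: choosing the correct master GDD and ingredients in each residue class, and checking that the resulting block total equals $C(n,3,2)$. The key structural point enabling the latter is that a $\{3\}$-GDD covers every cross-group pair exactly once, so the only excess over $\binom n2/3$ sits inside the small pieces filled into the groups; this is exactly what lets the arithmetic land on the minimum.

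For the even residues I would use the two large families of c.a.h.\ GDDs already built. When $n\equiv 0\pmod 6$ and $n$ is large, take a c.a.h.\ $\{3\}$-GDD of type $6^{n/6}$ (Proposition \ref{all6^t}) and fill every group with a c.a.h.\ $(6,3,2)$-covering (Proposition \ref{mincover}), combining via Proposition \ref{fillin}. When $n\equiv 4\pmod 6$ (respectively $n\equiv 2\pmod 6$), start instead from a c.a.h.\ $\{3\}$-GDD of type $6^t4^1$ (respectively $6^t8^1$) supplied by Proposition \ref{all6^tu^1}, and fill the groups with c.a.h.\ coverings of orders $6$ and $4$ (respectively $6$ and $8$). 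A short block count then gives totals $6m^2$, $6m^2+8m+3$, and $6m^2+4m+1$ for $n=6m$, $6m+4$, $6m+2$, each matching $C(n,3,2)$.

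For the odd residues I would adjoin points via Proposition \ref{adjoin}. For $n\equiv 3\pmod 6$ and $n$ large, adjoin $y=3$ points to a type-$6^t$ master, filling the distinguished group with a c.a.h.\ $(9,3,2)$-covering (an STS$(9)$, Proposition \ref{sts}) and the remaining groups with a c.a.h.\ $\{3\}$-GDD of type $1^63^1$ (Proposition \ref{miscsmallgdd}). For $n\equiv 1\pmod 6$, adjoin a single point to a type-$6^t$ master, filling throughout with a c.a.h.\ STS$(7)$. For $n\equiv 5\pmod 6$, adjoin $y=3$ points to a type-$6^t8^1$ master, filling the size-$8$ group with a c.a.h.\ $(11,3,2)$-covering and each size-$6$ group with type $1^63^1$. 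The block totals here work out to $6m^2+5m+1$, $6m^2+m$, and $6m^2+9m+4$ for $n=6m+3$, $6m+1$, $6m+5$, again hitting $C(n,3,2)$ exactly.

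The remaining work, and the main obstacle, lies at the boundary between these generic recursions and the explicit small designs. Because the GDD families require $t\geq 3$, each residue class leaves a finite list of small orders to dispatch directly; most are supplied by the c.a.h.\ STS's of Proposition \ref{sts} and the c.a.h.\ minimum coverings of Proposition \ref{mincover}, with a few requiring an ad hoc construction (for instance $n=23$ arises by adjoining one point to a type-$6^34^1$ GDD from Proposition \ref{6^tu^1}, while $n=17$ needs its own explicit covering). The delicate part throughout is not the ordering property, which the recursive propositions deliver automatically, but verifying that \emph{every} one of these constructions produces precisely $C(n,3,2)$ blocks, so that minimality and the alternating hamiltonian ordering hold simultaneously.
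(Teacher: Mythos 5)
Your proposal follows essentially the same route as the paper: the same residue-class split mod $6$, the same masters (type $6^t$, $6^t4^1$, $6^t8^1$ from Propositions \ref{all6^t} and \ref{all6^tu^1}), the same use of Propositions \ref{fillin} and \ref{adjoin} with the small c.a.h.\ ingredients, and correct block counts confirming minimality (which the paper leaves implicit). The only divergence is $n\equiv 5\pmod 6$, where the paper adjoins one point to a $6^{(n-5)/6}4^1$ master (filling with an STS$(7)$ and a $(5,3,2)$-covering, valid for $n\geq 23$) while you adjoin three points to a $6^t8^1$ master (filling with an $(11,3,2)$-covering and type-$1^63^1$ GDDs, valid for $n\geq 29$); both variants check out arithmetically, yours just leaves $n=23$ and $n=17$ to the ad hoc treatment you already flag.
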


\begin{proof}
The existence of alternating hamiltonian minimum $(n,3,2)$-coverings for $n\in\{3,4,\ldots,16\}\cup\{20\}$
has been established by either Proposition \ref{sts} or Proposition \ref{mincover}.

A c.a.h. minimum $(17,3,2)$-covering can be obtained by adjoining two points and filling
in the groups of a c.a.h. $\{3\}$-GDD of type $5^3$ with a c.a.h. STS$(7)$, which
exist by Propositions \ref{sts} and \ref{miscsmallgdd}.

For $n\equiv 0\pmod{6}$, $n\geq 18$, take a c.a.h. $\{3\}$-GDD of type $6^{n/6}$,
which exists by Proposition \ref{all6^t}, and fill in groups with a c.a.h. minimum
$(6,3,2)$-covering, which exists by Proposition \ref{mincover}.

For $n\equiv 1\pmod{6}$, $n\geq 19$, take a c.a.h. $\{3\}$-GDD of type $6^{(n-1)/6}$, which
exists by Proposition \ref{all6^t}, adjoin one point and fill in groups 
with a c.a.h. STS$(7)$, which exists by Proposition \ref{sts}.

For $n\equiv 2$ or $4\pmod{6}$, $n\geq 22$, take a c.a.h. $\{3\}$-GDD
of type $6^{(n-8)/6} 8^1$ or of type $6^{(n-4)/6} 4^1$ (which exists by
Proposition \ref{all6^tu^1}), respectively, and fill in groups with alternating hamiltonian
minimum $(m,3,2)$-coverings, $m\in\{4,6,8\}$, which exist by Proposition \ref{mincover}.

For $n\equiv 3\pmod{6}$, $n\geq 21$, take a c.a.h. $\{3\}$-GDD of type $6^{(n-3)/6}$,
adjoin three points and fill in the groups with c.a.h. $\{3\}$-GDD of type $1^6 3^1$ and
c.a.h. STS$(9)$, which exist by Propositions \ref{sts} and \ref{miscsmallgdd}.

For $n\equiv 5\pmod{6}$, $n\geq 23$, take a c.a.h. $\{3\}$-GDD of type $6^{(n-5)/6} 4^1$,
adjoin one point and fill in the groups with c.a.h. STS$(7)$
and an alternating hamiltonian minimum $(5,3,2)$-covering, 
which exist by Propositions \ref{sts} and \ref{mincover}.
\end{proof}

\begin{corollary}
\label{Ucycle}
There exists a minimum $(n,3,2)$-covering which possesses a $2$-shift universal cycle,
for all $n\geq 3$.
\end{corollary}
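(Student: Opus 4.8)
The plan is to obtain this corollary as an immediate consequence of the main theorem of this section together with the earlier equivalence between shift universal cycles and alternating hamiltonian cycles. The first observation is that a minimum $(n,3,2)$-covering is a $3$-uniform set system, so that $k=3$ and hence $k-1=2$ in the equivalence proposition. That proposition then tells us that a $2$-shift universal cycle for such a covering is precisely an alternating hamiltonian cycle in its edge-colored block intersection graph: the two notions encode exactly the same information, and the proposition's converse direction even supplies a constructive recipe for reading off the cycle.

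Next I would invoke the main theorem, which asserts the existence of an alternating hamiltonian minimum $(n,3,2)$-covering for every $n\geq 3$. By definition this means that the edge-colored block intersection graph of the covering admits an alternating hamiltonian cycle. Feeding this cycle through the constructive direction of the equivalence proposition yields the desired $2$-shift universal cycle, while the underlying set system remains a minimum $(n,3,2)$-covering throughout. Combining the two ingredients therefore gives, for every $n\geq 3$, a minimum $(n,3,2)$-covering that carries a $2$-shift universal cycle.

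The only point requiring any care is to confirm that the hypothesis supplied by the main theorem matches the hypothesis consumed by the equivalence. The main theorem delivers a covering that is merely \emph{alternating hamiltonian}, whereas many of the intermediate recursive constructions produce the stronger \emph{colorful} alternating hamiltonian property. Since colorfulness plays no role in the equivalence proposition, plain alternating hamiltonicity already suffices and no extra argument is needed. I expect no genuine obstacle here: the corollary is nothing more than a direct translation of an existence statement about alternating hamiltonian cycles into the language of $2$-shift universal cycles.
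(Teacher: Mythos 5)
Your proposal is correct and follows exactly the route the paper intends: the corollary is an immediate consequence of the equivalence proposition (with $k=3$, so $k-1=2$) applied to the main theorem's alternating hamiltonian minimum $(n,3,2)$-coverings. Your remark that colorfulness is not needed for this step is also accurate.
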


\section{Application to 2-Radius Sequences}

Jaromczyk and Lonc \cite{JaromczykLonc:2004} studied sequences
$(a_1,a_2,\ldots,a_m)\in[n]^m$
with the property that for every distinct $x,y\in[n]$,
there exist $a_i$ and $a_j$ such that $\{a_i,a_j\}=\{x,y\}$ and $|i-j| \leq k$. 
These sequences are known as {\em $k$-radius sequences of order $n$}. 
For given $n$ and $k$,
the objective is to find a shortest (optimal) $k$-radius sequence of order $n$. The length of such
a sequence is denoted $f_k(n)$. 

Motivation for studying short $k$-radius sequences comes from computation where a
certain two argument function must be evaluated for all pairs of $n$ large objects
$\O_1,\O_2,\ldots,\O_n$, which are
too large to be all held in fast primary memory 
(such as internal memory in the I/O model of Aggarwal and Vitter \cite{AggarwalVitter:1988},
and cache memory in the cache model of Sen {\em et al.} \cite{Senetal:2002}) at once.
Hence, instead of the simple two-loop algorithm
that iterates through all the pairs of objects, a schedule to determine which objects are to
be fetched into memory and which are to be replaced, is needed. This schedule must 
ensure that for all pairs of objects $x$ and $y$, there is some point in time where $x$
and $y$ are both in memory. If the memory can only hold $k+1$ objects at any one time,
such a schedule corresponds to a $k$-radius sequence $(a_1,a_2,\ldots,a_m)$
of order $n$, assuming a 
first-in-first-out (FIFO) object replacement strategy \cite{JaromczykLonc:2004}:
initially (at time step one)
the objects $\O_{a_1},\O_{a_{2}},\ldots,\O_{a_{k+1}}$ reside in primary memory
and at step time step $t\geq 2$, the object $\O_{a_{k+t}}$ is fetched to replace
object $\O_{a_{t-1}}$.
For
reasons of efficiency, short schedules are desirable. 

Extending beyond cache algorithms, $k$-radius sequences are also applicable when
the large objects reside in remote servers and local storage is not large enough to store
all the objects for computations required to run over all
possible pairs of objects. For bandwidth efficiency, we would like to
minimize the fetching of the large objects over the network into local storage. 
An example of this scenario is the restoration of the order of images in a sequence of MRI slices,
when the MRI images (which are typically quite large) reside in remote databases
\cite{GilkersonJaromczyk:2002}.

The function $f_1(n)$ has been completely determined by Ghosh \cite{Ghosh:1975}
in the context of database theory.

\begin{theorem}[Ghosh \cite{Ghosh:1975}]
\begin{equation*}
f_1(n) =\begin{cases}
\binom{n}{2}+1,&\text{if $n$ is odd;} \\
\binom{n}{2}+\frac{1}{2}n,&\text{if $n$ is even.}
\end{cases}
\end{equation*}
\end{theorem}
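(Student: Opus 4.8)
The plan is to translate the statement into a question about Eulerian trails in the complete graph $K_n$. Since $x\neq y$ forces $i\neq j$, the condition $|i-j|\le 1$ means that the pair $\{x,y\}$ must occur as $\{a_i,a_{i+1}\}$ for two \emph{consecutive} positions. Thus a sequence $(a_1,\ldots,a_m)\in[n]^m$ is a $1$-radius sequence of order $n$ precisely when the walk $a_1,a_2,\ldots,a_m$ in $K_n$ (we may discard any step with $a_i=a_{i+1}$, which only lengthens the sequence without covering a new pair, so an optimal sequence has no consecutive repeats) traverses every edge of $K_n$ at least once. Let $M$ be the multigraph on $[n]$ whose edges are the traversed steps, counted with multiplicity; then $M$ contains $K_n$ as a spanning subgraph, $M$ has exactly $m-1$ edges, and $M$ admits an Eulerian trail (the walk itself). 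Hence $f_1(n)$ is one more than the minimum number of edges in a connected multigraph that contains $K_n$ and possesses an Eulerian trail, open or closed.

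For the lower bound I would argue by degree parity. In the multigraph $M$ arising from a walk, the degree of a vertex $w$ is congruent mod $2$ to the number of times $w$ occurs among the two endpoints $a_1,a_m$; consequently at most two vertices of $M$ have odd degree. When $n$ is even, every vertex of $K_n$ has degree $n-1$, which is odd, so at every vertex other than the (at most two) walk endpoints the edges of $M$ lying beyond $K_n$ must contribute an odd, hence nonzero, number of incident edges. This forces at least $n-2$ vertices each to meet an extra edge; since each extra edge is incident to two vertices, $M$ carries at least $(n-2)/2=n/2-1$ edges beyond the $\binom{n}{2}$ edges of $K_n$. Therefore $m-1\ge\binom{n}{2}+n/2-1$, giving $f_1(n)\ge\binom{n}{2}+n/2$. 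When $n$ is odd, no such deficiency arises and the trivial count $m-1\ge\binom{n}{2}$ already yields $f_1(n)\ge\binom{n}{2}+1$.

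For the matching upper bound I would exhibit the extremal sequence as an Eulerian trail. If $n$ is odd, every vertex of $K_n$ has even degree $n-1$ and $K_n$ is connected, so $K_n$ has a closed Eulerian circuit; reading off its vertex sequence gives a $1$-radius sequence of length $\binom{n}{2}+1$. If $n$ is even, I would adjoin $n/2-1$ duplicate edges forming a matching that saturates $n-2$ of the $n$ odd-degree vertices, leaving exactly two vertices of odd degree. The augmented multigraph remains connected (it already contains the connected graph $K_n$) and has exactly two odd vertices, hence admits an open Eulerian trail between them; its vertex sequence has length $\binom{n}{2}+(n/2-1)+1=\binom{n}{2}+n/2$. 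Matching these constructions against the lower bounds yields the stated formula.

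The main obstacle is the even case, and specifically the lower-bound parity argument. The delicate point is that the walk may repeat edges, so one must work with the multigraph $M$ rather than a trail and verify that the only way to defeat the degree-parity obstruction at a vertex is to expend an extra incident edge there, whence the $n-2$ parity deficiencies cannot be covered by fewer than $n/2-1$ edges. By comparison the construction is routine once one confirms that the added matching preserves connectivity and leaves precisely two odd-degree vertices, guaranteeing the existence of the required Eulerian trail.
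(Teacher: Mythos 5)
Your argument is correct and complete: the reduction of $f_1(n)$ to the minimum number of edges of a connected multigraph that contains $K_n$ and admits an (open or closed) Eulerian trail is valid, the parity lower bound in the even case (at most two odd-degree vertices in a walk's edge multigraph, hence at least $(n-2)/2$ duplicated edges) is sound, and the matching constructions achieve both bounds. Note, however, that the paper offers no proof to compare against --- it imports this result verbatim from Ghosh \cite{Ghosh:1975} --- so your Eulerian-trail/Chinese-postman argument simply supplies the standard proof that the paper omits.
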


The function $f_2(n)$ is recently investigated by 
Jaromczyk and Lonc \cite{JaromczykLonc:2004}, who established the bounds below.

\begin{theorem}[Jaromczyk and Lonc \cite{JaromczykLonc:2004}]
\label{JL}
\begin{equation*}
L(n) \leq f_2(n)\leq \frac{1}{2}\binom{n}{2} + \frac{10n^2}{\log_2 n} + 2n^{1.64},
\end{equation*}
where
\begin{equation*}
L(n) = \begin{cases}
\frac{1}{2}\binom{n}{2}+\frac{1}{4}n+1,&\text{if $n\equiv 0\pmod{4}$;} \\
\frac{1}{2}\binom{n}{2}+2,&\text{if $n\equiv 1\pmod{4}$;} \\
\frac{1}{2}\binom{n}{2}+\frac{3}{4}n,&\text{if $n\equiv 2\pmod{4}$;} \\
\frac{1}{2}\binom{n}{2}+\frac{1}{2}n,&\text{if $n\equiv 3\pmod{4}$.} \\
\end{cases}
\end{equation*}
\end{theorem}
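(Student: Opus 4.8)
The plan is to prove the two inequalities of Theorem~\ref{JL} by entirely separate means: the lower bound $L(n)\le f_2(n)$ by double counting, and the upper bound by an explicit construction whose length beats the trivial target only by a lower-order overhead.

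For the lower bound I would first record that a sequence $(a_1,\ldots,a_m)\in[n]^m$ covers a pair $\{x,y\}$ only when $x$ and $y$ sit at two positions whose indices differ by $1$ or $2$. There are exactly $(m-1)+(m-2)=2m-3$ such index pairs, and all $\binom{n}{2}$ value pairs must be covered, so $2m-3\ge\binom{n}{2}$, which already gives the leading term $\tfrac12\binom{n}{2}$ of $L(n)$. To reach the exact parity-dependent constant I would switch to a per-symbol count: if the symbol $x$ occurs $t_x$ times then each occurrence has at most four neighbours within distance two, so the at most $4t_x$ neighbours of $x$ must supply all $n-1$ of the other symbols, forcing $t_x\ge\lceil (n-1)/4\rceil$ and hence $m=\sum_x t_x\ge n\lceil (n-1)/4\rceil$ for every $2$-radius sequence of length $m$. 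A short check shows this already equals $L(n)$ when $n\equiv 2,3\pmod 4$; in the two remaining residue classes the small additive corrections ($+1$ when $n\equiv 0$ and $+2$ when $n\equiv 1\pmod 4$) are recovered by noting that the occurrences at the two ends of the sequence have strictly fewer than four usable neighbours, together with a parity argument on $\sum_x t_x$.

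For the upper bound I would build a sequence in which almost every window of three consecutive symbols contributes two genuinely new pairs, since appending one new symbol covers one fresh distance-one pair and one fresh distance-two pair; the optimal rate of two new pairs per position then yields length $\tfrac12\binom{n}{2}+o(n^2)$ provided repeated or already-covered windows are kept scarce. Concretely I would partition $[n]$ into $\Theta(n/\log_2 n)$ groups of size $g=\Theta(\log_2 n)$, cover all intra-group pairs by short (optimal, obtained by exhaustive search on $g$ symbols) $2$-radius subsequences at a total cost of only $O(n\log_2 n)$, and cover the dominating $\approx\tfrac12 n^2$ inter-group pairs by an efficient packing built on the quotient of $[n]$ by the groups. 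Optimising $g$ balances the two sources of slack and yields the overhead $O(n^2/\log_2 n)$, while the residual $2n^{1.64}$ term is contributed by an explicit difference-family-type design used to realise the inter-group component cleanly.

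The hard part is the upper bound, and specifically the inter-group component: a naive interleaving of two groups covers cross pairs only at distance one and so wastes a factor of two, giving length $\approx\tfrac12 n^2$ rather than $\tfrac14 n^2$. The real work is therefore to arrange the inter-group symbols so that distance-two windows also carry fresh cross pairs, pushing the covering rate back up to two, and to bound the waste incurred at the $\Theta(n/\log_2 n)$ junctions between blocks by $O(n^2/\log_2 n)$; establishing that the required design exists for \emph{every} $n$ (not merely infinitely many) is where the difficulty concentrates. The lower bound, by contrast, is a finite counting argument once the boundary and parity bookkeeping is done.
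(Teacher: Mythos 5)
First, a point of comparison: the paper does not prove Theorem~\ref{JL} at all. It is quoted from Jaromczyk and Lonc \cite{JaromczykLonc:2004}, and the surrounding text only remarks that the upper bound ``comes from a number-theoretic construction'' in that reference. So there is no in-paper proof to measure you against; I can only assess your argument on its own terms.

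Your lower bound is essentially right and is the standard double count. The per-symbol inequality $4t_x\ge n-1$ gives $m\ge n\lceil(n-1)/4\rceil$, which I checked matches $L(n)$ exactly for $n\equiv 2,3\pmod 4$; for $n\equiv 1\pmod 4$ the global count of neighbour slots, $4m-6\ge n(n-1)$, already yields the $+2$; and for $n\equiv 0\pmod 4$ the $+1$ follows because the four boundary positions together forfeit six neighbour slots while each symbol with $t_x=n/4$ has only one slot to spare. This half is fine modulo writing that boundary bookkeeping out carefully.

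The upper bound, however, is a plan rather than a proof, and its decisive step is exactly the one you leave open. Covering the inter-group pairs at a rate of two genuinely new pairs per appended symbol --- one at distance one and one at distance two, for almost every position, for \emph{every} $n$ --- is the entire content of the theorem; ``an efficient packing built on the quotient'' and ``an explicit difference-family-type design'' are placeholders where the construction should be. Moreover, the specific error terms $10n^2/\log_2 n$ and $2n^{1.64}$ are artifacts of the particular number-theoretic construction of Jaromczyk and Lonc, and nothing in the group-partition sketch explains where the exponent $1.64$ or the constant $10$ would come from. As written, your argument yields at best $f_2(n)\le\frac{1}{2}\binom{n}{2}+o(n^2)$ \emph{conditional} on the existence of the inter-group gadget, which you have not exhibited. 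Either supply that construction explicitly or do what the paper does and cite the result.
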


The upper bound in Theorem \ref{JL} comes from a number-theoretic
construction \cite[Section 2]{JaromczykLonc:2004}, which we refer to as the
Jaromczyk-Lonc construction. Exact values of $f_2(n)$ are known previously only for $n\leq 7$ 
\cite{JaromczykLonc:2004,Gilkersonetal:unpublished}. 
Next, 
$f_2(n)$ is determined for $n\in\{8,10,11,14,15\}$ and better upper bounds on $f_2(n)$,
for $n\in\{9,12,13,16,17,18\}$, are given.

\subsection{Some New 2-Radius Sequences}
\label{new2radius}

The value of $f_2(n)$ meets the lower bound in Theorem \ref{JL} for $n\in\{8,10,11,14,15\}$.
The new optimal $2$-radius sequences proving this are given in Table \ref{newoptimal}.

\begin{table}
\caption{New optimal $2$-radius sequences of order $n$}
\label{newoptimal}
\footnotesize
\centering
\begin{tabular}{|c|c|l|}
\hline
$n$ & length & sequence \\
\hline
8 & 17 & $(3,6,2,7,8,5,6,4,1,8,7,3,4,2,5,1,3)$ \\
\hline
10 & 30 & $(2,6,8,10,9,1,5,4,7,10,2,6,4,5,9,2,1,3,10,5,8,7,9,3,6,7,1,8,4,3)$ \\
\hline
11 & 33 & $(10,9,7,11,2,3,1,10,11,8,6,3,1,7,4,11,3,5,9,6,4,10,2,5,6,4,7,8,5,9,1,2,8)$ \\
\hline
12 & 37 & $(12,6,7,11,10,9,3,6,5,2,9,1,8,5,7,3,12,1,10,6,4,8,3,11,2,1,7,4,9,12,8,2,$ \\
& & $10,4,5,11,12)$ \\
\hline
14 & 56 & $(13,9,8,12,2,5,3,11,13,14,4,9,10,1,6,14,8,3,7,10,11,8,14,5,7,13,12,6,4,8,$ \\
& & $1,3,6,2,11,9,12,3,10,4,13,2,1,7,11,4,1,5,12,10,14,2,5,9,6,7)$ \\
\hline
15 & 60 & $(10,9,7,1,11,14,9,15,12,2,8,6,13,7,5,3,15,6,11,12,5,14,2,3,10,12,1,13,3,$ \\
& & $4,11,15,8,10,5,6,9,3,8,5,1,4,2,6,14,1,13,15,7,8,14,4,10,11,13,2,9,7,4,$ \\
& & $12)$ \\
\hline
16 & 65 & $(7,8,9, 6,10,4,15,13,1,6,12,3,15,8,16,12,10,5,2,15,6,11,16,2,13,9,12,16,$ \\
& & $14,4,3,5,11,8,10,13,7,3,15,9,14,11,13,7,5,16,1,3,10,2,14,8,4,1,2,7,12,4,$ \\
& & $11,9,1,5,14,6,7) $ \\
\hline
18 & 90 & $(2,9,7,1,17,5,12,13,4,17,9,18,6,1,15,8,13,16,6,4,7,1,16,10,18,17,6,14,11,$ \\
& & $7,12,10,14,15,13,11,16,3,15,12,18,4,3,10,6,1,12,14,8,5,18,7,13,8,9,10,3,$ \\
& & $13,1,2,11,8,17,3,5,2,6,8,11,4,14,2,18,10,11,5,9,16,15,2,17,12,16,9,14,3,$ \\
& & $7,15,4,5)$ \\
\hline
\end{tabular}
\end{table}

When $n\in\{9,12,13,16,17,18\}$, we improve the on the shortest 2-radius sequence
of order $n$ currently known
\cite{JaromczykLonc:2004}. The new $2$-radius sequences are given in Table \ref{newshort}.
These sequences are not known to be optimal.

\begin{table}
\caption{New short $2$-radius sequences of order $n$}
\label{newshort}
\footnotesize
\centering
\begin{tabular}{|c|c|l|}
\hline
$n$ & length & sequence \\
\hline
9 & 21 & $(9,3,2,7,8,4,9,3,1,8,5,9,7,6,1,2,4,5,6,3,8)$ \\
\hline
13 & 42 & $(4,9,11,5,7,4,8,2,11,3,1,4,12,10,11,13,6,4,9,1,8,10,6,3,12,7,9,10,2,5,6,$ \\
& & $1,7,2,13,12,5,8,3,13,9,1)$ \\
\hline 
17 & 73 & $(1,2,3,4,5,1,6,7,3,8,9,1,10,11,3,12,13,1,14,15,3,16,17,2,5,7,9,4,6,8,2,11,$ \\
& & $13,4,10,12,5,8,13,15,6,10,14,2,9,15,12,7,14,11,5,15,17,4,14,16,8,10,17,7,$\\
& & $13,16,9,11,17,6,12,16,2,5,1,17,16)$ \\
\hline
\end{tabular}
\end{table}

\begin{table}
\caption{State of knowledge of $f_2(n)$, $2\leq n\leq 18$}
\label{f2}
\footnotesize
\centering
\setlength\tabcolsep{4pt}
\begin{tabular}{|c|ccccccccccccccccc|}
\hline
$n$ & $2$ & $3$ & $4$ & $5$ & $6$ & $7$ & $8$ & $9$ & $10$ & $11$ & $12$ & $13$ & $14$ & $15$ & $16$ & $17$ & $18$ \\
\hline
$f_2(n)$ & $2$ & $3$ & $5$ & $7$ & $12$ & $14$ & {\bf 17} & {\bf 20--21} & {\bf 30} & {\bf 33} & {\bf 37} & {\bf 41--42} & {\bf 56} & {\bf 60} & {\bf 65} & {\bf 70--73} & {\bf 90}  \\
\hline
\end{tabular}
\end{table}

The new 2-radius sequences obtained above are all found through
computer search. 
The state of knowledge for small values of $n$
is provided in Table \ref{f2}, where an entry with a single number gives the
exact value of $f_2(n)$, and an entry of the form ``$a$--$b$'' means that the
corresponding value of $f_2(n)$ lies between $a$ and $b$ (inclusive). A bold entry
indicates new results obtained in this paper, while the other entries are from
\cite{JaromczykLonc:2004,Gilkersonetal:unpublished}.

We now give details of our search procedure. The framework used is {\em hillclimbing}.
To construct a 2-radius sequence of order $n$ and length $m$, we start with a
random sequence in
$[n]^m$ and modify it iteratively to get ``closer'' and ``closer'' to a
2-radius sequence, until either we end up with a 2-radius sequence, or we get stuck.
We measure the ``closeness'' of a sequence
$S=(a_1,a_2,\ldots,a_m)\in[n]^m$
to a 2-radius sequence by its {\em defect}, denoted ${\rm def}(S)$, defined as
the number of pairs $\{x,y\}\subseteq[n]$
for which there do not exist $a_i,a_j$ such that $\{a_i,a_j\}=\{x,y\}$ and $|i-j|\leq 2$.
A sequence is therefore a 2-radius sequence if and only if it has zero defect.
At each step, modification of a sequence $(a_1,a_2,\ldots,a_m)$
proceeds as follows. We select three distinct
positions $i,j,k\in[m]$ and replace the values $a_i$, $a_j$, and $a_k$ by
elements of $[n]$, drawn uniformly at random. If this modification does not result
in a new sequence of higher defect, we accept the modification. Otherwise, we reject
the modification. The procedure is terminated when we have a sequence of zero defect,
and is restarted after a prespecified period of time without finding a defect-reducing
modification.
This hillclimbing procedure is described more formally in pseudocode in Algorithm \ref{hc}.

\begin{center}
\begin{minipage}{\textwidth }
\begin{algorithm}[H]
\KwIn{$n$, $m$}
\KwOut{2-radius sequence $S=(a_1,a_2,\ldots,a_m)$ of order $n$}

$S$ = random sequence $(a_1,a_2,\ldots,a_m)\in[n]^m$ \;
\While{${\rm def}(S)>0$}{
	$\{i,j,k\}$ = random 3-subset of $[m]$ \;
	$(r_i,r_j,r_k)$ = random element of $[n]^3$ \;
	$S'$ = $(b_1,b_2,\ldots,b_m)$, where $b_h = \begin{cases}
	a_h,&\text{if $h\in[m]\setminus\{i,j,k\}$} \\
	r_h,&\text{if $h\in\{i,j,k\}$}
	\end{cases}$ \;
	\If{${\rm def}(S') \leq {\rm def}(S)$}{
		$S$ = $(a_1,a_2,\ldots,a_m)$ =  $S'$ \;
	}
}
\caption{Hillclimbing procedure for constructing 2-radius sequences}
\label{hc}
\end{algorithm}
\end{minipage}
\end{center}

All the 2-radius sequences
in Tables \ref{newoptimal} and \ref{newshort} are obtained using Algorithm \ref{hc}.

\subsection{Improvements to Theoretically Provable Bounds}

Although the bounds in Theorem \ref{JL} are asymptotically tight, proving
$f_2(n)=(1+o(1)) n^2/4$, the upper bound
is rather weak. Gilkerson {\em et al.} \cite{Gilkersonetal:unpublished}
proved the following upper bound, which although not asymptotically tight, improves on
the upper bound in Theorem \ref{JL} for $n \leq 1.329\times 10^{36}$.

\begin{theorem}[Gilkerson {\em et al.} \cite{Gilkersonetal:unpublished}]
\label{G}
Let $m^2|n$ such that there exists either a {\rm PBD}$(m^2,\{m\})$ (affine plane of order $m$)
or a {\rm PBD}$(m^2-m+1,\{m\})$ (projective plane of order $m-1$). 
Then $f_2(n) \leq n^2/3+n$.
\end{theorem}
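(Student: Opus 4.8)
The bridge I would build on is the elementary observation that three consecutive terms $a_i,a_{i+1},a_{i+2}$ of a sequence realize all three pairs of the triple $\{a_i,a_{i+1},a_{i+2}\}$ at radius at most two; hence a sequence over $[n]$ is a $2$-radius sequence exactly when the triples of consecutive terms cover every pair of $[n]$. Consequently, if I can cover all pairs by triples $T_1,\ldots,T_N$ ordered so that consecutive $T_i,T_{i+1}$ share a point (and the two shared endpoints of each block are distinct), then writing the blocks out and identifying each overlap produces a $2$-radius sequence of length $2N+O(1)$. Since $N$ can be taken close to $C(n,3,2)=\lceil\frac{n}{3}\lceil\frac{n-1}{2}\rceil\rceil\sim n^2/6$, such a threaded covering already yields length $\sim n^2/3$. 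The entire task therefore reduces to building a pair-covering of size about $\tfrac{n(n-1)}{6}$ \emph{together with} an explicit overlapping ordering of its blocks, and the role of the plane hypothesis is to supply both cheaply.

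The plan is to use $m^2\mid n$ to blow up the affine plane. Writing $w=n/m^2$, I take the affine plane $\mathrm{PBD}(m^2,\{m\})$ as a master structure and replace each of its $m^2$ points by a group of $w$ elements, so the groups partition $[n]$. Every cross-group pair lies between two points on a unique line, so the cross-group pairs split, line by line, into complete $m$-partite graphs $K_{w,\ldots,w}$; I cover each of these by a $\{3\}$-GDD of type $w^m$ (available from Theorem \ref{CHR}, padding with a few blocks when the divisibility conditions fail) and cover the within-group pairs by a minimum $(w,3,2)$-covering inside each group. A direct count gives total block number $\frac{m^2(m^2-1)}{6}w^2+\frac{m^2 w(w-1)}{6}=\frac{n(n-1)}{6}$ when the ingredient designs are exact, matching $C(n,3,2)$; the projective plane of order $m-1$ (block size again $m$) serves as the alternative master structure, with weights chosen to total $n$.

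Next I would thread these blocks into one sequence. Within a single line the blocks of the $\{3\}$-GDD of type $w^m$ can be ordered with consecutive overlaps because any two of its groups are joined by blocks sharing a point; the resolvability of the plane (its $m+1$ parallel classes) lets me move from one line to the next, and in the projective case any two lines already meet in a point, supplying the link for free. I splice each within-group covering in at the group it touches, exactly as in the ``filling in groups'' argument of Proposition \ref{fillin}. There are $m(m+1)=O(n)$ lines and at most $m^2$ groups to connect, and with overlapping links each connection costs only a bounded amount, so the accumulated additive overhead stays within the budget $\frac{n^2}{3}+n-\frac{n(n-1)}{3}=\frac{4n}{3}$; summing then gives $f_2(n)\le \frac{n(n-1)}{3}+\frac{4n}{3}=\frac{n^2}{3}+n$.

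The hard part is precisely this threading-and-bookkeeping step: arranging the overlapping order so that each block is entered and exited through \emph{distinct} shared points (so its three points lie as enter-mid-exit), while keeping the additive loss under $\tfrac{4n}{3}$. This is a colorful-alternating ordering problem on the block-intersection structure, and a careless splicing would inflate either the constant or the linear term; the crude per-line linking afforded by the plane is what forces the additive $+n$ rather than a smaller term. I would finally remark that Corollary \ref{Ucycle} already furnishes, for every $n$, a minimum $(n,3,2)$-covering carrying a $2$-shift universal cycle; read as a linear sequence and closed up, that cycle is a $2$-radius sequence of length $2\,C(n,3,2)+O(1)\le \frac{n^2}{3}+n$ with no plane hypothesis at all, which is the sense in which the present paper supersedes this theorem.
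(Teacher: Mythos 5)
The paper does not actually prove Theorem \ref{G}: it is stated with attribution to the unpublished manuscript of Gilkerson, Jaromczyk, and Lonc, so there is no in-paper argument to compare yours against. Judged on its own, your reconstruction is a plausible guess at the geometric mechanism (blow up the points of the plane into groups, cover cross-group pairs line by line, cover within-group pairs separately, then thread the blocks with single-point overlaps), and your count of $\frac{n(n-1)}{6}$ blocks in the exact case is correct. But the two steps you wave at are precisely where such a proof would live, and as written they do not close.

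First, the sizes of the ingredients are not controlled. A $\{3\}$-GDD of type $w^m$ exists only under the parity and divisibility conditions of Theorem \ref{CHR} (it fails, for instance, whenever $(m-1)w$ is odd), and in the bad cases a covering of the cross pairs of one blown-up line by triples confined to that line needs at least $\lceil (m-1)w/2\rceil$ triples through each point, hence exceeds $\binom{m}{2}w^2/3$ by roughly $mw/6$; summed over the $m(m+1)$ lines this is of order $mn$, not ``a few blocks,'' and it destroys the additive budget of $\frac{4n}{3}$. Likewise $C(w,3,2)-\frac{w(w-1)}{6}$ is about $w/6$ for even $w$, contributing a further $\approx n/6$ excess triples over the $m^2$ groups. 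Second, the threading is asserted rather than proved: you must order all the blocks so that consecutive blocks meet \emph{and} each block is entered and exited through distinct points, since otherwise three consecutive symbols of the written-out sequence do not spell the block and its pairs are not realized at radius two. That is exactly the colorful alternating hamiltonian cycle problem that occupies Section \ref{AH} of this paper, and it is not dispatched by noting that the plane has parallel classes or that two projective lines meet. Your closing observation is correct, however: Corollary \ref{Ucycle} and Theorem \ref{CLTZ} give $f_2(n)\le 2C(n,3,2)+1\le n^2/3+n$ for every $n\ge 3$ with no plane hypothesis, which is why the paper only cites Theorem \ref{G} for comparison rather than reproving it.
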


However, at present, PBD$(m^2,\{m\})$ and PBD$(m^2-m+1,\{m\})$ are only known to exist
only when $m$ or $m-1$ is a prime power, respectively
(see, for example, \cite{Storme:2007}). Hence, the applicability of
Theorem \ref{G} is limited. The results in \S\ref{AH} imply the following stronger result.

\begin{theorem}
\label{CLTZ}
For all $n\geq 3$, $f_2(n) \leq 2C(n,3,2)+1$.
\end{theorem}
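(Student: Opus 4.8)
The plan is to convert an alternating hamiltonian minimum $(n,3,2)$-covering into a 2-radius sequence by reading off its $2$-shift universal cycle. Corollary \ref{Ucycle} guarantees that such a covering exists for all $n\geq 3$, so I would start by taking its $2$-shift universal cycle $U=(u_0,u_1,\ldots,u_{2m-1})$, where $m=C(n,3,2)$ is the number of blocks. By the definition of a $2$-shift universal cycle (with $r=k=3$), the windows $\langle u_{2i+1},u_{2i+2},u_{2i+3}\rangle$ for $i\in\bbZ_m$ run through all the blocks of the covering exactly once. Equivalently, via the Proposition relating $(k-1)$-shift universal cycles to alternating hamiltonian cycles, the consecutive triples $\{u_{2i},u_{2i+1},u_{2i+2}\}$ are precisely the blocks $A_i$ of the covering.

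The key step is then to check the 2-radius property. Every pair $\{x,y\}\subseteq[n]$ lies in some block of the covering (since a covering covers all pairs), so $\{x,y\}\subseteq A_i=\{u_{2i},u_{2i+1},u_{2i+2}\}$ for some $i$. Within any such block of three consecutive entries of $U$, the two occurrences realizing $x$ and $y$ sit at positions differing by at most $2$, which is exactly the requirement $|i-j|\leq 2$ for a $2$-radius sequence. Hence the cyclic sequence $U$, viewed as a linear sequence, covers every pair at distance at most two. The only subtlety is that $U$ is a \emph{cyclic} word of length $2m$, whereas a $2$-radius sequence is a \emph{linear} word; to turn the cycle into a line I would cut it open and append a short fixed-length tail (repeating the first two symbols $u_0,u_1$ at the end) so that the pairs covered ``across the seam'' of the cycle remain covered in the linear sequence. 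This adds a constant number of symbols, giving a linear sequence of length $2m+1=2C(n,3,2)+1$.

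I would finish by recording the length bookkeeping: the cyclic word has $2m$ entries, and reopening it into a valid linear $2$-radius sequence costs exactly one extra symbol, yielding length $2C(n,3,2)+1$ and therefore $f_2(n)\leq 2C(n,3,2)+1$.

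The main obstacle I anticipate is the seam argument — verifying carefully that exactly one repeated symbol (rather than two) suffices to preserve coverage of the pairs straddling positions $u_{2m-1},u_0,u_1$, and confirming that the resulting length is the claimed $2C(n,3,2)+1$ and not $2C(n,3,2)+2$. This amounts to tracking which wrap-around windows of the cyclic word are lost when it is linearized, and checking that a single appended entry restores all of them; everything else is a direct translation of the alternating hamiltonian structure into the sequence via the shift universal cycle, using results already established in \S\ref{AH}.
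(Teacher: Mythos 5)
Your proposal is correct and follows essentially the same route as the paper, which simply appends $u_0$ to the $2$-shift universal cycle to obtain the sequence $(u_0,u_1,\ldots,u_{2m-1},u_0)$ of length $2m+1$. The seam question you flag resolves as you suspect: since every block starts at an even index, only the block $\{u_{2m-2},u_{2m-1},u_0\}$ straddles the wrap-around, and a single appended $u_0$ restores all three of its pairs at distance at most two.
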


\begin{proof}
Follows directly from Corollary \ref{Ucycle} 
and the observation that if $(u_0,u_1,\ldots$, $u_{2m-1})$ is
a 2-shift universal cycle for an $(n,3,2)$-covering, then
$(u_0,u_1,\ldots,u_{2m-1}$, $u_0)$ is a 2-radius sequence of order $n$ and
length $2m+1$.
\end{proof}

The upper bound in Theorem \ref{CLTZ} is strictly better than the upper bound in
Theorem \ref{G} for all $n\geq 3$, since 
\begin{equation*}
\left(\frac{n^2}{3}+n\right) - (2C(n,3,2)+1) = \begin{cases}
\frac{4n}{3}-1,&\text{if $n\equiv 1$ or $3\pmod{6}$;} \\
n-\frac{5}{3},&\text{if $n\equiv 2$ or $4\pmod{6}$;} \\
\frac{4n-7}{3},&\text{if $n\equiv 5\pmod{6}$;} \\
n-1,&\text{if $n\equiv 0\pmod{6}$.}
\end{cases}
\end{equation*}
It is also better than the upper bound in Theorem \ref{JL}
for all $n\leq 1.329 \times 10^{36}$. 

\subsection{Actual Performance of Constructions}

The upper bound in Theorem \ref{JL} is what is theoretically provable of
the Jaromczyk-Lonc construction of 2-radius sequences. However, its actual
performance is much better. We provide in Table \ref{actual} a comparison of the
lengths of 2-radius sequences actually produced by the Jaromczyk-Lonc construction
and that obtained through our bound in Theorem \ref{CLTZ}. In Table \ref{actual},
\begin{align*}
{\rm len}_{\rm JL~theory} &= \text{theoretically provable length of 2-radius sequence of order $n$} \\
& \text{ \phantom{=} produced by the Jaromczyk-Lonc construction}, \\
{\rm len}_{\rm JL~actual} &= \text{actual length of 2-radius sequence of order $n$ produced 
by the} \\
& \text{ \phantom{=} Jaromczyk-Lonc construction}, \\
{\rm len}_{\rm this} &= \text{actual (and also theoretically provable) length of 2-radius sequence} \\
& \text{ \phantom{=} of order $n$ produced by our construction in this paper}.
\end{align*}
Entries in bold denote orders for which our construction outperforms the actual performance
of the Jaromczyk-Lonc construction.

\begin{table}[h!]
\caption{Performance comparison for constructions of 2-radius sequences
of order $n$, $9\leq n\leq 44$}
\label{actual}
\footnotesize
\centering
\setlength\tabcolsep{4pt}
\begin{tabular}{|l|cccccccccccc|}
\hline
$n$ & $9$ & $10$ & $11$ & $12$ & $13$ & $14$ & $15$ & $16$ & $17$ & $18$ & $19$ & $20$ \\
\hline
${\rm len}_{\rm JL~theory}$ & 346 & 410 & 479 & 552 & 629 & 711 & 798 & 888 & 983 & 1082 & 1185 & 1292 \\
\hline
${\rm len}_{\rm JL~actual}$ & 37 & 49 & 39 & 53 & 45 & 62 & 80 & 99 & 76 & 98 & 105 & 129 \\
\hline
${\rm len}_{\rm this}$ & {\bf 25} & {\bf 35} & 39 & {\bf 49} & 53 & 67 & {\bf 71} & {\bf 87} & 93 & 109 & 115 & 135 \\
\hline 
\multicolumn{13}{c}{\phantom{a}} \\
\hline
$n$ & $21$ & $22$ & $23$ & $24$ & $25$ & $26$ & $27$ & $28$ & $29$ & $30$ & $31$ & $32$ \\
\hline
${\rm len}_{\rm JL~theory}$ & 1403 & 1518 & 1638 & 1761 & 1888 & 2019 & 2153 & 2292 & 2434 & 2580 & 2730 & 2884  \\
\hline
${\rm len}_{\rm JL~actual}$ & 158 & 185 & 150 & 179 & 170 & 202 & 256 & 290 & 217 & 254 & 297 & 336 \\
\hline
${\rm len}_{\rm this}$ & {\bf 141} & {\bf 163} & 171 & 193 & 201 & 227 & {\bf 235} & {\bf 263} & 273 & 301 & 311 & 343 \\
\hline
\multicolumn{13}{c}{\phantom{a}} \\
\hline
$n$ & $33$ & $34$ & $35$ & $36$ & $37$ & $38$ & $39$ & $40$ & $41$ & $42$ & $43$ & $44$ \\
\hline
${\rm len}_{\rm JL~theory}$ & 3041 & 3202 & 3366 & 3535 & 3707 & 3882 & 4061 & 4244 & 4430 & 4620 & 4813 & 5010 \\
\hline
${\rm len}_{\rm JL~actual}$ & 382 & 424 & 361 & 405 & 351 & 398 & 446 & 495 & 430 & 482 & 540 & 594 \\
\hline
${\rm len}_{\rm this}$ & {\bf 353} & {\bf 387} & 399 & 433 & 445 & 483 & 495 & 535 & 549 & 589 & 603 & 647 \\
\hline
\end{tabular}
\end{table}

\subsection{Recent Advancements}

Blackburn and Mckee \cite{BlackburnMckee:2010} have very recently revealed unexpected
connections between $k$-radius sequences, lattice tilings, and logarithms in $\bbZ_k$.
In particular, they obtained an upper bound on $f_2(n)$ that improves on
Theorem \ref{JL}.

\section{Conclusion}

A new ordering on the blocks of set systems, called shift universal cycles, is introduced.
Minimum $(n,3,2)$-coverings that admit $2$-shift universal cycles are shown to
exist for all $n$. These minimum $(n,3,2)$-coverings are used to construct short
$2$-radius sequences, which have applications in cache algorithms.

\appendix 

\section{\boldmath Some c.a.h. Set Systems}

A hamiltonian cycle of length $m$
in the block intersection graph of a set system or GDD, $\fS$, is specified
as a sequence of blocks in $\fS$: $A_0A_1\cdots A_{m-1}$. The edges of the
hamiltonian cycle is taken to be $\{A_i,A_{i+1}\}$, $i\in\bbZ_m$.

In each case, the set of points is taken to be $[n]$.

\subsection{\boldmath Some small c.a.h. STS$(n)$} \hfill
\label{smallsts}

{\footnotesize
\begin{longtable}{|c|l|}
\hline
$n$ & c.a.h. cycle in STS$(n)$ \\
\hline
$7$ & $\{5,3,1\}$ $\{1,7,4\}$ $\{4,2,3\}$ $\{3,6,7\}$ $\{7,5,2\}$  $\{2,1,6\}$ 
$\{6,4,5\}$  \\
\hline
$9$ & $\{3,6,9\}$ $\{9,5,1\}$ $\{1,6,8\}$ $\{8,7,9\}$ $\{9,4,2\}$ $\{ 2,8,5\}$ $\{5,3,7\}$
$\{7,4,1\}$ $\{1,3,2\}$ $\{2,7,6\}$ \\
& $\{6,5,4\}$ $\{4,8,3\}$ \\
\hline
$13$ & $\{5,10,3\}$ $\{3,13,12\}$ $\{12,7, 5\}$ $\{5,8,4\}$ $\{4,11,6\}$ $\{6,2,3\}$ $\{3,8,1\}$
$\{1,7,9\}$ $\{9,6,5\}$ \\
& $\{5,1,2\}$ $\{2,10,8\}$ $\{8,12,9\}$  $\{9,2,4\}$ $\{4,12,10\}$ $\{10,6,7\}$ $\{7,8,11\}$
$\{11,3,9\}$ $\{9,13,10\}$ \\
& $\{10,1,11\}$  $\{11,12, 2\}$ $\{2,13,7\}$ $\{7,3,4\}$ $\{4,13,1\}$ 
$\{1,12,6\}$ $\{6,8,13\}$ $\{13,11,5\}$ \\
\hline
$15$ & $\{7,11,1\}$ $\{1,10,5\}$ $\{5,6,15\}$ $\{15,8,11\}$ $\{11,2, 4\}$  $\{4,13,8\}$
$\{8,14,1\}$ $\{1,9,12\}$ $\{12,6,11\}$ \\
& $\{11,5,13\}$ $\{13,14,12\}$ $\{12,4,15\}$  $\{15,1,2\}$ $\{2,3,12\}$
$\{12,7,10\}$ $\{ 10,11,9\}$ $\{9,13,6\}$ \\ 
& $\{6,7,8\}$ $\{8,10,2\}$  $\{2,14,6\}$ $\{6,4,1\}$
$\{1,13,3\}$ $\{3,11,14\}$ $\{14,15, 9\}$ $\{9,7,4\}$ $\{4,10,14\}$ \\
& $\{14,5,7\}$ $\{7,3,15\}$
$\{15,13,10\}$ $\{10,6,3\}$ $\{3,4,5\}$ $\{5,12,8\}$ $\{8,3,9\}$  $\{9,5,2\}$ $\{2,13,7\}$ \\
\hline
\end{longtable}
}

\subsection{\boldmath Some small alternating hamiltonian minimum $(n,3,2)$-coverings} \hfill
\label{smallcovering}

The alternating hamiltonian minimum $(n,3,2)$-coverings given below
for $n\in\{6,8,10,11,12,14,16,20\}$ are in fact c.a.h.

{\footnotesize
\begin{longtable}{|c|l|}
\hline
$n$ & alternating hamiltonian cycle in minimum $(n,3,2)$-coverings \\
\hline
$4$ & $\{3,1,2 \}$ $\{2,4,1\}$ $\{1,4,3\}$ \\
\hline
$5$ & $\{ 1,2,3\}$ $\{ 3,4,5\}$ $\{ 5,2,4\}$ $\{4,5,1\}$ \\
\hline
$6$ & $\{6,3,1 \}$ $\{1,4,2 \}$ $\{2,5,3 \}$ $\{3,6,4 \}$ $\{4,1,5 \}$  $\{5,2,6 \}$ \\
\hline
$8$ &  $\{3,8,1 \}$ $\{ 1,4,2\}$ $\{ 2,5,3 \}$ $\{3,6,4 \}$ $\{4,8,5  \}$  $\{  5,1,6\}$ $\{6,2,8 \}$
 $\{ 8,1,7 \}$ $\{  7,4,5 \}$ $\{5,6,7  \}$  \\
 & $\{7,2,3  \}$ \\
 \hline
$10$ & $\{7,9,8\}$
    $\{8,4,3\}$
    $\{3,6,9\}$
    $\{9,10,8\}$
    $\{8,6,1\}$
    $\{1,10,2\}$
    $\{2,8,5\}$
    $\{5,1,9 \}$
    $\{9,2,4\}$ 
    $\{4,1,7\}$ \\
    &
    $\{7,8,10\}$
    $\{10,4,3\}$
    $\{3,1,2\}$
    $\{2,7,6\}$
    $\{6,10,5\}$
    $\{5,3,7\}$ \\
\hline
$11$ & $\{8,11,3\}$
    $\{3,6,10\}$
    $\{10,4, 9\}$
    $\{9,8,1\}$
    $\{1,3, 2\}$
    $\{ 2, 7, 10 \}$
    $\{ 10,1,11 \}$
    $\{11,7, 5 \}$
    $\{ 5, 9,6\}$ \\
&
    $\{ 6,11,4\}$
    $\{ 4,1, 2 \}$
    $\{ 2, 6, 8 \}$
    $\{ 8,10,5\}$
    $\{ 5,2,1 \}$
    $\{ 1, 6, 7 \}$
    $\{ 7,9,3 \}$
    $\{ 3, 5,4\}$
    $\{ 4, 7, 8 \}$ \\
\hline
$12$ & $\{11,6,4 \}$
    $\{ 4, 5, 12 \}$
    $\{ 12,3,1 \}$
    $\{ 1, 8, 9 \}$
    $\{ 9,11,2 \}$
    $\{ 2, 12,3\}$
    $\{ 3, 11,8 \}$
    $\{ 8, 12,9 \}$
    $\{ 9,4,10 \}$ \\
&
    $\{ 10,7,2\}$
    $\{ 2,1,5 \}$
    $\{ 5, 9,6 \}$
    $\{ 6, 12,7 \}$
    $\{ 7,5,11 \}$
    $\{ 11,1, 10 \}$
    $\{ 10,5, 8\}$
    $\{ 8,6,2 \}$
    $\{ 2,4,1 \}$ \\
&
    $\{ 1, 6, 7 \}$ 
    $\{ 7,9,3\}$
    $\{ 3, 6, 10 \}$
    $\{ 10, 12,11 \}$ \\
\hline
$14$ & $\{ 9, 10, 13 \}$
    $\{ 13,12, 14 \}$
    $\{ 14,9, 10 \}$
    $\{10, 3, 5\}$
    $\{ 5,8,4 \}$
    $\{ 4,13,1\}$
    $\{ 1, 8,3 \}$
    $\{ 3,6,2\}$ 
    $\{ 2, 7, 13 \}$ \\
    &
    $\{ 13,5, 11 \}$
    $\{ 11,6,4 \}$
    $\{ 4,3, 14 \}$
    $\{ 14,1, 2 \}$
    $\{ 2, 10,8 \}$
    $\{ 8, 12,9 \}$
    $\{ 9,6,5 \}$
    $\{ 5,2,1 \}$ 
    $\{ 1, 10, 11 \}$ \\
    &
    $\{ 11, 14,12\}$
    $\{ 12,4, 10 \}$
    $\{ 10,6, 7 \}$
    $\{ 7, 8, 11 \}$
    $\{ 11,2, 12 \}$
    $\{12, 3,13 \}$
    $\{ 13,6, 8 \}$ 
    $\{ 8,7, 14 \}$ \\
    &
    $\{ 14,5, 6 \}$
    $\{6,12, 1 \}$
    $\{ 1, 7, 9 \}$
    $\{ 9,2, 4 \}$
    $\{4,7, 3\}$
    $\{ 3, 11,9 \}$ \\
\hline
$16$ & $\{ 7,6, 8 \}$
    $\{ 8,3, 9 \}$
    $\{ 9,7,4 \}$
    $\{ 4, 13,8 \}$
    $\{ 8,14,1 \}$
    $\{ 1, 9, 12 \}$
    $\{ 12,11,6 \}$
    $\{ 6,14,2 \}$
    $\{ 2, 13,7 \}$ \\
&
    $\{ 7, 10, 12 \}$
    $\{ 12, 14,13 \}$
    $\{ 13,11,5 \}$
    $\{ 5,10,1 \}$
    $\{ 1, 4, 6 \}$
    $\{ 6,5, 16 \}$
    $\{16, 1, 2 \}$
    $\{ 2, 8, 10 \}$ \\
&
    $\{ 10,9, 11 \}$
    $\{ 11,7,1 \}$
    $\{ 1, 15,2\}$
    $\{ 2, 3, 12 \}$
    $\{ 12,4, 15 \}$
    $\{ 15,5, 6\}$
    $\{ 6, 13,9 \}$
    $\{ 9, 14, 15 \}$ \\
&
    $\{ 15,11,8 \}$
    $\{ 8,7, 16 \}$
    $\{ 16,9, 10 \}$
    $\{ 10,4, 14 \}$
    $\{ 14,16,13\}$
    $\{ 13,1, 3\}$
    $\{ 3, 14,11 \}$ 
    $\{ 11, 16,12 \}$ \\
    &
    $\{ 12,8,5\}$
    $\{ 5, 7, 14 \}$
    $\{ 14, 15, 16 \}$
    $\{ 16,3, 4 \}$
    $\{ 4,11,2 \}$
    $\{ 2, 9,5 \}$
    $\{ 5,4,3 \}$ 
    $\{ 3, 6, 10 \}$ \\
    &
    $\{ 10, 13, 15 \}$ 
    $\{ 15,3, 7 \}$ \\
\hline
$20$ & $\{ 12,5,3 \}$
    $\{ 3, 14,8\}$
    $\{ 8, 13, 19 \}$
    $\{ 19,11,5\}$
    $\{5,8, 4 \}$
    $\{ 4,2, 11 \}$
    $\{ 11, 12, 15 \}$
    $\{ 15,14, 18 \}$ \\
&
    $\{ 18,1, 8 \}$
    $\{ 8, 10, 17 \}$
    $\{ 17,3, 9 \}$
    $\{ 9,19,2\}$
    $\{ 2, 3, 6 \}$
    $\{ 6, 8, 15 \}$
    $\{15,10, 2 \}$
    $\{ 2, 7, 13 \}$ \\
    &
    $\{ 13, 14, 20 \}$ 
    $\{ 20,1, 2 \}$
    $\{ 2, 18,17 \}$
    $\{ 18,13, 14 \}$
    $\{ 14,1, 9 \}$
    $\{ 9, 13,10\}$
    $\{ 10,5, 16 \}$
    $\{ 16,9,7\}$ \\
&
    $\{ 7,19, 17 \}$
    $\{ 17,5, 15 \}$
    $\{ 15,7,1 \}$
    $\{ 1, 3, 10 \}$
    $\{ 10,6, 7\}$
    $\{ 7, 11,8 \}$
    $\{ 8,2,16 \}$
    $\{ 16,13,12\}$ \\
&
    $\{ 12,4, 17 \}$
    $\{ 17, 18, 20 \}$
    $\{20, 9, 10 \}$
    $\{ 10,4, 18 \}$
    $\{ 18,13,5 \}$
    $\{ 5, 6, 9 \}$
    $\{ 9,4, 15 \}$
    $\{ 15, 20,16 \}$ \\
&
    $\{ 16,3, 11 \}$
    $\{ 11,10, 14 \}$
    $\{ 14,16,4 \}$
    $\{ 4,7,3 \}$
    $\{ 3,19, 18 \}$
    $\{ 18,6, 16 \}$
    $\{ 16,1, 17 \}$
    $\{ 17,11,6 \}$ \\
&
    $\{ 6,13,4 \}$
    $\{ 4,3, 20 \}$
    $\{ 20,19,18 \}$
    $\{ 18,7, 12 \}$
    $\{ 12,2, 14 \}$
    $\{ 14,7,5\}$
    $\{ 5,2,1\}$
    $\{ 1, 6, 12 \}$ \\
&
    $\{ 12,8, 9 \}$
    $\{ 9, 18,11 \}$
    $\{ 11, 12, 20 \}$
    $\{ 20,5, 6 \}$
    $\{ 6, 14, 19 \}$
    $\{ 19,16,15\}$
    $\{ 15,3, 13 \}$ 
    $\{13,11, 1 \}$ \\
    &
    $\{ 1, 4, 19 \}$
    $\{ 19,10, 12\}$ \\
\hline
\end{longtable}
}

\subsection{\boldmath Some small c.a.h. $\{3\}$-GDDs} \hfill
\label{3GDDa}

{\footnotesize
\begin{longtable}{|c|l|}
\hline
$T$ & c.a.h. cycle in $\{3\}$-GDD of type $T$ \\
\hline
$2^4$ & $\{2,3,1\}$ 
$\{1,8,7\}$ 
$\{7,2,4\}$ 
$\{4,1,6\}$ 
$\{6,7,5\}$ 
$\{5,4,3\}$ 
$\{3,6,8\}$ 
$\{8,5,2\}$ \\
\hline
$1^6 3^1$ & $\{8,6,4\}$ 
$\{4,1,7\}$ 
$\{7,9,2\}$ 
$\{2,4,3\}$ 
$\{3,6,9\}$ 
$\{9,8,1\}$ 
$\{1,3,5\}$ 
$\{5,8,2\}$ 
$\{2,1,6\}$ 
$\{6,5,7\}$ \\
&
$\{7,3,8\}$ \\
\hline
$3^3$ & $\{5,6,7\}$ 
$\{7,3,8\}$ 
$\{8,1,9\}$ 
$\{9,7,2\}$ 
$\{2,4,3\}$ 
$\{3,5,1\}$ 
$\{1,2,6\}$ 
$\{6,8,4\}$ 
$\{4,9,5\}$ \\
\hline
$2^3 4^1$ & $\{4,2,1\}$ 
$\{1,8,3\}$ 
$\{3,2,9\}$ 
$\{9,6,1\}$ 
$\{1,7,10\}$ 
$\{10,5,2\}$ 
$\{2,7,8\}$ 
$\{8,5,6\}$ 
$\{6,10,3\}$ \\
&
$\{3,4,5\}$ 
$\{5,9,7\}$ 
$\{7,6,4\}$ \\
\hline
$2^6$ & $\{3,2,1\}$ 
$\{1,5,4\}$ 
$\{4,2,6\}$ 
$\{6,8,1\}$ 
$\{1,10,9\}$ 
$\{9,2,11\}$ 
$\{11,1,12\}$ 
$\{12,10,2\}$ 
$\{2,5,7\}$ \\
&
$\{7,4,3\}$ 
$\{3,5,12\}$ 
$\{12,9,4\}$ 
$\{4,8,11\}$ 
$\{11,3,6\}$ 
$\{6,7,9\}$ 
$\{9,8,5\}$ 
$\{5,6,10\}$ 
$\{10,11,7\}$ \\
&
$\{7,12,8\}$ 
$\{8,10,3\}$ \\
\hline
$1^{12}3^1$ & $\{11,15,5\}$ 
$\{5,13,1\}$ 
$\{1,9,12\}$ 
$\{12,3,5\}$ 
$\{5,14,9\}$ 
$\{9,15,2\}$ 
$\{2,10,13\}$ 
$\{13,12,7\}$ \\
&
$\{7,6,1\}$ 
$\{1,3,2\}$ 
$\{2,8,12\}$ 
$\{12,6,14\}$ 
$\{14,15,13\}$ 
$\{13,4,3\}$ 
$\{3,15,7\}$ 
$\{7,8,9\}$ \\
&
$\{9,4,10\}$ 
$\{10,8,5\}$ 
$\{5,7,2\}$ 
$\{2,14,4\}$ 
$\{4,12,15\}$ 
$\{15,1,10\}$ 
$\{10,12,11\}$ 
$\{11,3,9\}$ \\
&
$\{9,6,13\}$ 
$\{13,11,8\}$ 
$\{8,15,6\}$ 
$\{6,5,4\}$ 
$\{4,8,1\}$ 
$\{1,14,11\}$ 
$\{11,4,7\}$ 
$\{7,14,10\}$ \\
&
$\{10,3,6\}$ 
$\{6,2,11\}$ \\
\hline
$5^3$ & $\{2,3,1\}$ 
$\{1,11,6\}$ 
$\{6,4,5\}$ 
$\{5,9,1\}$ 
$\{1,8,15\}$ 
$\{15,2,4\}$ 
$\{4,9,14\}$ 
$\{14,1,12\}$ 
$\{12,8,4\}$ \\
&
$\{4,11,3\}$ 
$\{3,5,7\}$ 
$\{7,8,9\}$ 
$\{9,10,2\}$ 
$\{2,12,7\}$ 
$\{7,14,6\}$ 
$\{6,10,8\}$ 
$\{8,13,3\}$ \\
&
$\{3,10,14\}$ 
$\{14,15,13\}$ 
$\{13,5,12\}$ 
$\{12,11,10\}$ 
$\{10,5,15\}$ 
$\{15,7,11\}$ 
$\{11,9,13\}$ \\
&
$\{13,6,2\}$ \\
\hline
\end{longtable}
}

\section{\boldmath Some GDDs}

\subsection{\boldmath Some small $\{4\}$-GDDs of type $3^t$} \hfill
\label{4GDD3^t}

Each of the $\{4\}$-GDDs of type $3^t$ listed in this section is
on the set of points $[3t]$, with groups $\{i,i+t,i+2t\}$, $i\in[t]$.

{\footnotesize
\begin{longtable}{|c|l|}
\hline
$T$ & $\{4\}$-GDD of type $T$ \\
\hline
$3^4$ & {\boldmath $\{1,2,3,4\}$ $\{1,6,7,8\}$
$\{2,8,9,11\}$ $\{3,5,8,10\}$ }$\{4,5,6,11\}$ $\{4,7,9,10\}$
$\{2,5,7,12\}$ \\
& $\{1,10,11,12\}$ $\{3,6,9,12\}$ \\
\hline
$3^5$ & {\boldmath $\{1,2,4,8\}$ $\{2,3,5,9\}$
$\{3,4,6,10\}$ $\{4,5,7,11\}$ $\{5,6,8,12\}$ $\{6,7,9,13\}$} \\
& {\boldmath $\{7,8,10,14\}$} $\{2,10,11,13\}$ $\{1,9,10,12\}$
$\{3,11,12,14\}$ $\{4,12,13,15\}$ $\{1,5,13,14\}$ \\
& $\{2,6,14,15\}$ $\{1,3,7,15\}$ $\{8,9,11,15\}$ \\
\hline
$3^8$ & {\boldmath $\{9,14,15,19\}$
$\{1,2,5,19\}$ $\{1,3,8,12\}$ $\{2,3,6,20\}$ $\{2,4,13,16\}$
$\{3,4,7,21\}$} \\
& {\boldmath $\{4,5,9,22\}$ $\{5,6,10,23\}$
$\{1,6,7,11\}$ $\{11,17,18,21\}$} $\{2,7,9,12\}$
$\{3,9,10,13\}$ \\
& $\{4,10,11,14\}$ $\{5,11,12,15\}$ $\{6,12,13,17\}$
$\{7,13,14,18\}$ $\{10,15,17,20\}$ $\{12,18,19,22\}$ \\
&
$\{13,19,20,23\}$ $\{1,14,20,21\}$ $\{2,15,21,22\}$
$\{3,17,22,23\}$ $\{1,4,18,23\}$ $\{5,8,18,20\}$ \\
& $\{4,17,19,24\}$
$\{3,15,16,18\}$ $\{2,8,14,17\}$ $\{1,13,15,24\}$
$\{12,14,16,23\}$ $\{2,11,23,24\}$ \\
&  $\{1,10,16,22\}$
$\{8,9,21,23\}$ $\{7,20,22,24\}$ $\{6,16,19,21\}$ $\{8,11,13,22\}$
$\{10,12,21,24\}$ \\
&  $\{9,11,16,20\}$ $\{7,8,10,19\}$ $\{6,9,18,24\}$
$\{5,7,16,17\}$ $\{4,6,8,15\}$ $\{3,5,14,24\}$ \\
\hline
$3^9$ & {\boldmath $\{1,16,18,22\}$ $\{3,16,19,24\}$ $\{8,10,13,24\}$ $\{2,13,15,25\}$ $\{2,4,19,23\}$} \\
&
{\boldmath $\{1,4,15,26\}$ $\{3,15,20,23\}$ $\{3,4,5,7\}$ $\{2,5,6,21\}$ $\{1,2,7,17\}$ $\{3,10,14,17\}$} \\
& {\boldmath $\{4,9,10,25\}$
$\{7,8,9,12\}$$\{5,12,19,26\}$$\{11,12,14,24\}$}
 $\{12,13,16,23\}$ \\
 & $\{1,13,14,20\}$ $\{2,9,14,16\}$
$\{14,21,22,25\}$ $\{2,3,8,22\}$ $\{3,18,25,26\}$ $\{7,11,18,23\}$ \\
& $\{7,10,15,21\}$ $\{9,11,15,17\}$
$\{4,6,11,16\}$ $\{5,8,15,16\}$ $\{8,11,19,25\}$ $\{9,19,20,21\}$ \\
& $\{9,22,23,26\}$ $\{7,20,22,24\}$
$\{17,23,24,25\}$ $\{6,8,18,20\}$ $\{4,12,17,20\}$ $\{2,10,12,18\}$ \\
& $\{10,16,20,26\}$ $\{5,10,11,22\}$
$\{4,18,21,24\}$ $\{11,13,21,26\}$ $\{6,7,14,26\}$ $\{14,15,18,19\}$ \\
& $\{6,17,19,22\}$ $\{1,6,12,25\}$
$\{1,8,21,23\}$ $\{1,5,9,24\}$ $\{3,6,9,13\}$ $\{5,13,17,18\}$ \\
& $\{7,13,19,27\}$ $\{12,15,22,27\}$
$\{6,10,23,27\}$ $\{2,24,26,27\}$ $\{4,8,14,27\}$ $\{1,3,11,27\}$ \\
& $\{16,17,21,27\}$ $\{5,20,25,27\}$\\
\hline
$3^{12}$ & {\boldmath $\{1,8,12,19\}$
$\{1,2,5,10\}$ $\{2,3,6,11\}$ $\{3,4,7,13\}$ $\{4,5,8,14\}$
$\{5,6,9,15\}$} \\
& {\boldmath $\{6,7,10,16\}$ $\{7,8,11,17\}$
$\{8,9,13,18\}$ $\{9,10,14,19\}$ $\{2,9,20,24\}$} \\
&
{\boldmath $\{10,11,15,20\}$ $\{11,13,16,21\}$ $\{13,14,17,22\}$
$\{14,15,18,23\}$ $\{15,16,19,25\}$} \\
&
{\boldmath $\{16,17,20,26\}$
$\{17,18,21,27\}$ $\{18,19,22,28\}$ $\{19,20,23,29\}$
$\{20,21,25,30\}$} \\
& {\boldmath $\{21,22,26,31\}$ $\{22,23,27,32\}$
$\{23,25,28,33\}$ $\{25,26,29,34\}$ $\{26,27,30,35\}$} \\
&
{\boldmath $\{1,27,28,31\}$ $\{2,28,29,32\}$$\{3,29,30,33\}$ $\{4,30,31,34\}$
$\{5,31,32,35\}$} \\ 
& {\boldmath $\{1,6,32,33\}$ $\{2,7,33,34\}$
$\{3,8,34,35\}$ $\{1,4,9,35\}$} $\{5,7,20,27\}$ $\{5,13,23,24\}$ \\
&
$\{4,6,19,26\}$ $\{4,11,12,22\}$ $\{3,5,18,25\}$ $\{3,10,21,36\}$
$\{2,4,17,23\}$ $\{1,3,16,22\}$ \\
& $\{9,11,25,31\}$ $\{9,17,28,36\}$
$\{8,10,23,30\}$ $\{8,16,24,27\}$  $\{7,9,22,29\}$
$\{7,12,15,26\}$ \\
& $\{6,8,21,28\}$ $\{6,14,25,36\}$
$\{14,16,29,35\}$  $\{12,14,21,32\}$ $\{13,15,28,34\}$ \\
&
$\{13,20,31,36\}$ $\{11,14,27,33\}$ $\{11,19,24,30\}$
$\{10,13,26,32\}$  $\{10,12,18,29\}$ \\
& $\{3,17,19,32\}$
$\{12,17,25,35\}$ $\{2,16,18,31\}$ $\{16,23,34,36\}$
$\{1,15,17,30\}$ \\
& $\{15,22,24,33\}$ $\{6,20,22,35\}$
$\{3,12,20,28\}$ $\{5,19,21,34\}$ $\{2,19,27,36\}$
$\{4,18,20,33\}$ \\
&  $\{1,18,24,26\}$ $\{3,9,23,26\}$
$\{6,12,23,31\}$ $\{2,8,22,25\}$ $\{5,22,30,36\}$ $\{1,7,21,23\}$ \\
&
$\{4,21,24,29\}$  $\{6,13,27,29\}$ $\{9,12,27,34\}$
$\{5,11,26,28\}$ $\{8,26,33,36\}$ $\{4,10,25,27\}$ \\
&
$\{7,24,25,32\}$  $\{9,16,30,32\}$ $\{2,12,13,30\}$
$\{8,15,29,31\}$ $\{1,11,29,36\}$ $\{7,14,28,30\}$ \\
&
$\{10,24,28,35\}$  $\{11,18,32,34\}$ $\{4,15,32,36\}$
$\{10,17,31,33\}$ $\{3,14,24,31\}$ $\{2,15,21,35\}$ \\
&
$\{7,18,35,36\}$  $\{1,14,20,34\}$ $\{6,17,24,34\}$
$\{13,19,33,35\}$  $\{5,12,16,33\}$ \\
\hline
\end{longtable}
}

\subsection{\boldmath Some small $\{4,7\}$-GDDs of type $3^t u^1$} \hfill
\label{47GDD}

Each of the $\{4,7\}$-GDDs of type $3^t u^1$ listed in this section is on the set of
points $[3t+u]$, with groups $\{3i-2,3i-1,3i\}$, $i\in[t]$, and $\{3t+1,3t+2,\ldots,3t+u\}$.

{\footnotesize
\begin{longtable}{|c|l|}
\hline
$T$ & $\{4,7\}$-GDD of type $T$ \\
\hline
$3^5 6^1$ & {\boldmath $\{1,4,14,16\}$ $\{2,11,13,16\}$ $\{5,7,15,16\}$ $\{1,7,11,17\}$ $\{5,10,13,17\}$} \\
& {\boldmath $\{6,8,15,17\}$
$\{6,9,12,16\}$ $\{3,8,10,16\}$} {\em \{2,9,14,17\} \{1,9,10,18\}} $\{3,4,12,17\}$ \\
& $\{4,9,13,19\}$
$\{4,11,15,18\}$ $\{8,12,13,18\}$ $\{2,6,7,18\}$ $\{3,5,14,18\}$ $\{1,5,8,19\}$ \\
& $\{7,12,14,19\}$
$\{2,10,15,19\}$ $\{3,6,11,19\}$ $\{1,6,13,20\}$ $\{4,7,10,20\}$ $\{8,11,14,20\}$ \\
& $\{2,5,12,20\}$
$\{3,9,15,20\}$ $\{1,12,15,21\}$ $\{6,10,14,21\}$ $\{5,9,11,21\}$ $\{2,4,8,21\}$ \\
&  $\{3,7,13,21\}$
 \\
\hline
$3^6 6^1$ & {\boldmath $\{10,13,18,20\}$ $\{3,8,11,20\}$ $\{1,7,11,16\}$ 
$\{3,13,16,19\}$ $\{5,8,18,19\}$} \\
& {\boldmath $\{2,5,10,14\}$
$\{6,11,14,19\}$ $\{2,4,9,19\}$ $\{1,12,17,19\}$ $\{1,5,15,20\}$} \\
& {\em \{1,4,18,21\} \{7,14,18,22\}}
$\{2,6,16,20\}$ $\{4,8,13,17\}$ $\{9,14,17,20\}$ $\{4,7,12,20\}$ \\
& $\{7,10,15,19\}$ $\{2,11,18,23\}$
$\{3,5,7,21\}$ $\{3,10,17,22\}$ $\{3,4,14,23\}$ $\{6,8,10,21\}$ \\
& $\{1,6,13,22\}$ $\{6,7,17,23\}$
$\{9,11,13,21\}$ $\{5,9,16,22\}$ $\{1,9,10,23\}$ $\{12,14,16,21\}$ \\
& $\{2,8,12,22\}$ $\{5,12,13,23\}$
$\{2,15,17,21\}$ $\{4,11,15,22\}$ $\{8,15,16,23\}$ $\{1,8,14,24\}$ \\
& $\{4,10,16,24\}$ $\{5,11,17,24\}$
$\{2,7,13,24\}$ $\{3,6,9,12,15,18,24\}$
\\
\hline
$3^{10} 12^1$ & {\boldmath $\{1,4,8,31\}$
$\{4,7,11,32\}$ $\{7,16,29,31\}$ $\{3,19,25,31\}$ $\{2,10,19,32\}$} \\
& {\boldmath $\{2,11,18,31\}$ $\{12,15,24,31\}$ $\{13,24,25,32\}$
$\{6,22,28,32\}$} \\
& {\boldmath $\{14,17,28,31\}$ $\{10,21,22,31\}$
$\{5,14,21,32\}$ $\{1,17,20,32\}$ $\{9,20,26,31\}$} \\
&
{\boldmath $\{8,26,30,32\}$ $\{12,23,29,32\}$ $\{15,18,27,32\}$
$\{6,13,30,31\}$} {\em \{6,12,19,33\}} \\
& {\em \{2,6,14,34\}}
$\{5,23,27,31\}$ $\{3,9,16,32\}$ $\{7,10,14,33\}$ $\{1,9,25,33\}$
$\{5,13,22,33\}$ \\
& $\{16,27,28,33\}$ $\{4,20,23,33\}$
$\{3,11,29,33\}$  $\{2,15,26,33\}$  $\{8,17,24,33\}$ \\
&
$\{18,21,30,33\}$ $\{10,13,17,34\}$ $\{4,12,28,34\}$
$\{8,16,25,34\}$  $\{1,19,30,34\}$ \\
& $\{7,23,26,34\}$
$\{9,15,22,34\}$ $\{5,18,29,34\}$ $\{11,20,27,34\}$
$\{3,21,24,34\}$ \\
& $\{13,16,20,35\}$  $\{1,7,15,35\}$
$\{11,19,28,35\}$ $\{3,4,22,35\}$ $\{10,26,29,35\}$ \\
&
$\{12,18,25,35\}$  $\{5,9,17,35\}$  $\{2,8,21,35\}$
$\{14,23,30,35\}$ $\{6,24,27,35\}$ $\{16,19,23,36\}$ \\
&
$\{4,10,18,36\}$  $\{1,14,22,36\}$  $\{6,7,25,36\}$
$\{2,13,29,36\}$ $\{15,21,28,36\}$ $\{8,12,20,36\}$ \\
&
$\{5,11,24,36\}$  $\{3,17,26,36\}$  $\{9,27,30,36\}$
$\{19,22,26,37\}$ $\{7,13,21,37\}$ $\{4,17,25,37\}$ \\
&
$\{9,10,28,37\}$  $\{2,5,16,37\}$  $\{1,18,24,37\}$
$\{11,15,23,37\}$ $\{8,14,27,37\}$ $\{6,20,29,37\}$ \\
&
$\{3,12,30,37\}$  $\{22,25,29,38\}$  $\{10,16,24,38\}$
$\{7,20,28,38\}$ $\{1,12,13,38\}$ $\{5,8,19,38\}$ \\
& $\{4,21,27,38\}$
$\{14,18,26,38\}$  $\{11,17,30,38\}$ $\{2,9,23,38\}$
$\{3,6,15,38\}$ $\{2,25,28,39\}$ \\
& $\{13,19,27,39\}$
$\{1,10,23,39\}$  $\{4,15,16,39\}$ $\{8,11,22,39\}$
$\{7,24,30,39\}$ \\
& $\{17,21,29,39\}$  $\{3,14,20,39\}$
$\{5,12,26,39\}$  $\{6,9,18,39\}$ $\{1,5,28,40\}$
$\{16,22,30,40\}$ \\
& $\{4,13,26,40\}$ $\{7,18,19,40\}$
$\{11,14,25,40\}$  $\{3,10,27,40\}$ $\{2,20,24,40\}$ \\
&
$\{6,17,23,40\}$ $\{8,15,29,40\}$ $\{9,12,21,40\}$
$\{1,27,29,41\}$  $\{2,4,30,41\}$ $\{3,5,7,41\}$ \\
&  $\{6,8,10,41\}$
$\{9,11,13,41\}$ $\{12,14,16,41\}$ $\{15,17,19,41\}$
$\{18,20,22,41\}$ \\
& $\{21,23,25,41\}$  $\{24,26,28,41\}$
$\{1,6,11,16,21,26,42\}$ $\{2,7,12,17,22,27,42\}$ \\
&
$\{3,8,13,18,23,28,42\}$ $\{4,9,14,19,24,29,42\}$
$\{5,10,15,20,25,30,42\}$ 
 \\
\hline
\end{longtable}
}

\section*{Acknowledgment}

The authors are grateful 
to Megan Dewar for making available her PhD thesis \cite{Dewar:2007}
and to Jerzy Jaromczyk for making
available the paper \cite{Gilkersonetal:unpublished}.

\providecommand{\bysame}{\leavevmode\hbox to3em{\hrulefill}\thinspace}
\providecommand{\MR}{\relax\ifhmode\unskip\space\fi MR }
\providecommand{\MRhref}[2]{%
  \href{http://www.ams.org/mathscinet-getitem?mr=#1}{#2}
}
\providecommand{\href}[2]{#2}

\end{document}